\documentclass[sn-mathphys-num]{sn-jnl}


\usepackage{graphicx}%
\usepackage{multirow}%
\usepackage{amsmath,amssymb,amsfonts}%
\usepackage{amsthm}%
\usepackage{mathrsfs}%
\usepackage[title]{appendix}%
\usepackage{xcolor}%
\usepackage{textcomp}%
\usepackage{manyfoot}%
\usepackage{booktabs}%
\usepackage{algorithm}%
\usepackage{algorithmicx}%
\usepackage{algpseudocode}%
\usepackage{listings}%


\theoremstyle{thmstyleone}%
\newtheorem{theorem}{Theorem}
\newtheorem{prop}[theorem]{Proposition}%

\theoremstyle{thmstyletwo}%
\newtheorem{oss}{Remark}%

\theoremstyle{thmstylethree}%
\newtheorem{definition}{Definition}%
\newtheorem{lemma}{Lemma}

\raggedbottom

\usepackage{graphicx} 
\usepackage[utf8]{inputenc}
\usepackage[T1]{fontenc} 
\usepackage[british]{babel} 
\usepackage{amssymb} 
\usepackage{amsmath}
\usepackage{amsthm}
\usepackage{amsfonts} 
\usepackage{dsfont}
\usepackage{amscd}
\usepackage{bm}
\usepackage{bbm}
\usepackage{color}
\usepackage{wasysym} %
\usepackage{mathtools}
\usepackage{mathrsfs}
\usepackage[makeroom]{cancel}
\usepackage{version}
\usepackage{xcolor}
\usepackage{textcomp}
\usepackage[mathscr]{euscript}
\usepackage{epsfig}
\usepackage{array}
\usepackage{indentfirst}
\usepackage{float}
\usepackage{color}
\usepackage{xcolor}
\usepackage{algorithm}
\usepackage{enumerate}
\usepackage{caption}
\usepackage{subcaption} 
\usepackage{booktabs}
\mathtoolsset{showonlyrefs}
\usepackage{hyperref}
\usepackage{emptypage} 
\usepackage{afterpage}




\newcommand{\CX}{\mathcal{X}}

\newcommand{\R}{\mathbb{R}}
\newcommand{\N}{\mathbb{N}}

\newcommand{\diff}{\mathrm{d}}
\newcommand{\sign}{\operatornamewithlimits{sign}}
\newcommand{\argmin}{\operatornamewithlimits{argmin}}
\newcommand{\argmax}{\operatornamewithlimits{argmax}}

\def\Mx{{\mathcal{M}(\Omega)}}
\def\Mxpos{{\mathcal{M}^+(\Omega)}}
\def\kl{\mathcal{D}_{KL}}	
\def\dkl{\tilde{\mathcal{D}}_{KL}}	

\graphicspath{{Figures}}
\newcommand{\red}[1]{{#1}}

\begin{document}

\title[Off-the-grid regularisation for Poisson inverse problems]{Off-the-grid regularisation for Poisson inverse problems}


\author*[1,2]{\fnm{Marta} \sur{Lazzaretti}}\email{lazzaretti@dima.unige.it}

\author[1]{\fnm{Claudio} \sur{Estatico}}\email{estatico@dima.unige.it}

\author[4]{\fnm{Alejandro} \sur{Melero}}\email{
alejandro.melero@achucarro.org}

\author[2,3]{\fnm{Luca} \sur{Calatroni}}\email{luca.calatroni@unige.it}


\affil[1]{\orgdiv{Dipartimento di Matematica}, \orgname{Università di Genova}, \orgaddress{\street{Via Dodecaneso 35}, \city{Genova}, \postcode{16145}, \country{Italy}}}

\affil[2]{\orgdiv{I3S Laboratory}, \orgname{CNRS-UniCA-Inria}, \orgaddress{\street{2000 Route des Lucioles}, \city{Sophia Antipolis}, \postcode{06903}, \country{France}}}

\affil[3]{\orgdiv{Machine Learning Genoa Center (MaLGa)}, \orgname{Università di Genova}, \orgaddress{\street{Via Dodecaneso 35}, \city{Genova}, \postcode{16145}, \country{Italy}}}

\affil[4]{\orgdiv{Laboratory of GTPases and Neurosignalling}, \orgname{Achucarro Basque Center for Neuroscience}, \orgaddress{\street{Parque Científico UPV/EHU, Edif. Sede, Planta 3}, \city{Leioa}, \postcode{E-48940}, \country{Spain}}}


\abstract{
 Off-the-grid regularisation has been extensively employed over the last decade in the context of ill-posed inverse problems formulated in the continuous setting of the space of Radon measures $\Mx$. These approaches enjoy convexity and counteract the discretisation biases as well the numerical instabilities typical of their discrete counterparts.
In the framework of sparse reconstruction of discrete point measures (sum of weighted Diracs), a Total Variation regularisation norm in $\Mx$ is typically combined with an $L^2$ data term modelling additive Gaussian noise. 
To assess the framework of off-the-grid regularisation in the presence of signal-dependent Poisson noise, we consider in this work a variational model where Total Variation regularisation is coupled with a Kullback-Leibler data term under a non-negativity constraint. Analytically, we study the optimality conditions of the composite functional and analyse its dual problem. Then, we consider an homotopy strategy to select an optimal regularisation parameter and use it within a Sliding Frank-Wolfe algorithm. Several numerical experiments on both 1D/2D/3D simulated and real 3D fluorescent microscopy data are reported.}

\keywords{Off-the-grid sparse regularisation, Poisson noise, Sliding Frank-Wolfe, fluorescence microscopy imaging.}

\maketitle

\section{Introduction}

Discrete (or \emph{on-the-grid}) sparse optimisation approaches are nowadays established techniques in the field of mathematical signal and image inverse problems.  In the context of linear ill-posed inverse problems, for instance, they aim at retrieving a sparse approximation of a quantity of interest $x$  (e.g., an image) from blurred, noisy and potentially incomplete acquisitions $y\in\R^M$ on a regular grid of size $N\geq M$, see~Fig.~\ref{fig:grid1}. The discretisation parameter $N$ determines the localisation precision, as only the centres of the discretisation intervals are candidates for reconstruction. As such, in case of rough discretisations,  very approximate reconstructions may be computed (see, e.g., Fig.~\ref{fig:grid2} where small values $M=N$ are used). To obtain higher precision, one typically chooses a grid-size parameter  $N>M$ for reconstruction, as in Fig.~\ref{fig:grid3}. 
Choosing a large value for $N$, however, may cause instabilities in the reconstructions \cite{Duval2017thingrids} due to the higher numerical complexity.

\textit{Off-the-grid} approaches aim at overcoming such difficulties. They can be thought indeed as the natural framework to deal with the case $N\to+\infty$ of \textit{on-the-grid} formulations \cite{Duval2017thingrids_b,Laville2021}.  In such framework, the spatial domain $\Omega\subseteq\R^d$ is not discretised by a regular grid, but, rather, the quantity of interest is modelled as an element of a suitable functional space defined on $\Omega$. A natural framework for spike reconstruction problems, is, for instance, the space of Radon measures $\mu\in\Mx$ where the quantity of interest can be modelled as the measure $\mu_{a,x}= \sum_{i=1}^n a_i \delta_{x_i}$  with $n$ being the number of spikes and where the question is therefore how to estimate the number of spikes $n$, and then how to retrieve both continuous positions $x_i\in\Omega$ and amplitudes $a_i\geq 0$ for $i=1,\ldots,n$.

Inverse problems in the space of measures and off-the-grid optimisation methods have been first proposed in \cite{Scherzer2009, bredies2013inverse,decastro2012exact,FernandezGranda2013} and since then they have been a topic of intense research activity for the mathematical community, both from an analytical and numerical viewpoint, see, e.g., \cite{duval2014exact,denoyelle2017support,boyer2017adapting,poon2019multi,Denoyelle2019,Flinth,Traonmilin2024}. Off-the-grid methods have been proved to be particularly useful in applications where fine-scale details need to be retrieved from noisy acquisitions, such as spike detection in astronomy and microscopy \cite{Denoyelle2019}, as well as parameter estimation in spectroscopy \cite{Dossal2017} and density mixture estimation \cite{DeCastro2021}. 
Standard approaches in this setting usually combine off-the-grid regularisation with an additive  (Gaussian) noise modelling on the underlying signal, which is in general easier to work with, from both an analytical and a computational point of view. Such modelling corresponds to the well-studied variational formulation of the Beurling LASSO (BLASSO) model \cite{bredies2013inverse,FernandezGranda2013}. Beyond Gaussian noise models, we also mention the recent work \cite{Pouchol2024}, where a study on the singularity of minimisers for general divergences defined on $\mathcal{M}(\Omega)$ under non-negativity constraints and no further explicit regularisation is carried out, along with numerical validations on exemplar medical imaging problems.

\medskip

In this work, we consider an off-the-grid modelling in $\Mx$ under the specific modelling assumption of signal-dependent Poisson noise in the data. This choice is motivated by some particular biological applications of interest, such as fluorescence microscopy, where, due to the photon emission nature of the light, Poisson noise is better suited than the Gaussian one to describe the process of photon counts on acquired images \cite{Bertero2018}. While in a discrete setting, a precise modelling of noise statistics if often not necessary due to the inevitable biases introduced by the choice of the regularisation employed, a natural question  is whether whenever a refined off-the-grid regularisation is used, a precise noise model could indeed be relevant.
As a graphical visualisation, we report in Figure \ref{fig:grid} a visual comparison between reconstructions obtained with on-the-grid approaches and the off-the-grid approach proposed in this work to solve a spike-deconvolution problem under the choice of the Poisson data term considered in this work.

\begin{figure}
    \centering
    \begin{subfigure}{0.4\textwidth}
        \includegraphics[width=\textwidth]{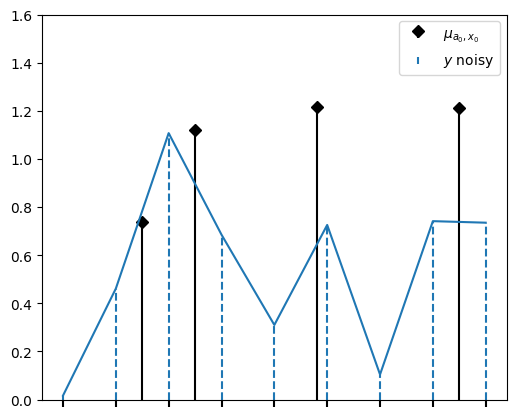}
        \caption{Acquisition $y\in\R^M$}
        \label{fig:grid1}
    \end{subfigure}
    \begin{subfigure}{0.4\textwidth}
        \includegraphics[width=\textwidth]{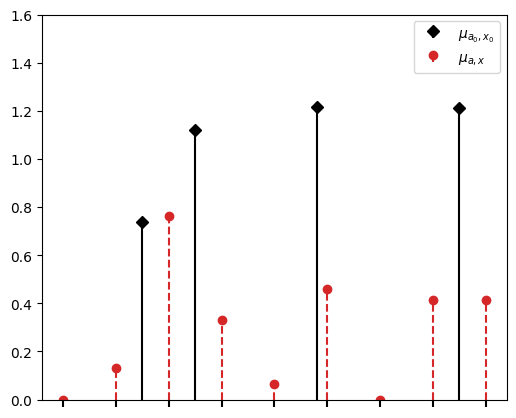}
        \caption{Discrete reconstruction $x\in\R^M$}
        \label{fig:grid2}
    \end{subfigure}\\
    \begin{subfigure}{0.4\textwidth}
        \includegraphics[width=\textwidth]{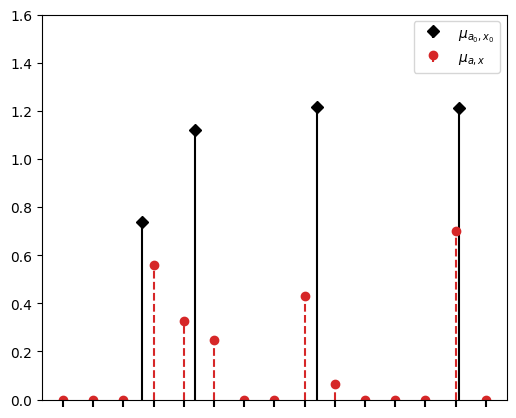}
        \caption{Discrete reconstruction $x\in\R^N$}
        \label{fig:grid3}
    \end{subfigure}
    \begin{subfigure}{0.4\textwidth}
        \includegraphics[width=\textwidth]{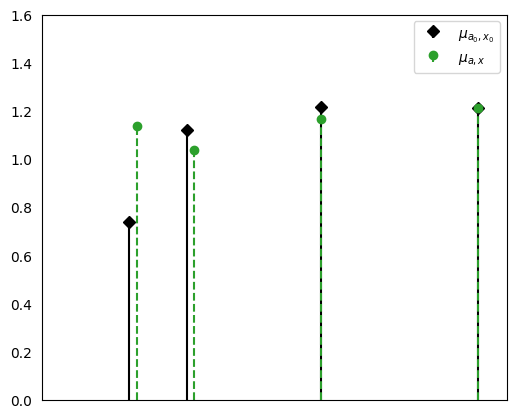}
        \caption{Off-the-grid rec. $\mu_{a,x}\in\Mx$}
        \label{fig:grid4}
    \end{subfigure}
    \caption{Comparison between discrete (on-the-grid) and off-the-grid sparse reconstructions with a Poisson data term. In black: the ground-truth spikes to retrieve. In Fig.\ref{fig:grid1}: the acquired blurred and noisy signal $y\in\R^M$ lying on a low-resolution grid of size $M$. In Fig.\ref{fig:grid2}: in red, a discrete reconstruction with support on a grid with $M$ pixels. In Fig.\ref{fig:grid3}: in red, discrete reconstruction with support on a grid with $N>M$ pixels. In Fig.\ref{fig:grid4}: 
    in green, off-the-grid reconstruction. 
    }
    \label{fig:grid}
\end{figure}

\subsection{Inverse problems in the space of Radon measures $\Mx$}

In this section we recall some important definitions and properties of the space of Radon measures $\Mx$ following \cite{poon2019sparse,Denoyelle2019,Laville2021}. For more details on Radon measures see \cite{Rudin1987,cohn2013measure}.

Let $\Omega\subseteq \R^d$, with $d\in\N$, $d\ge 1$, be a compact subset of $\R^d$ with non-empty interior. We denote by $\mathscr{C}(\Omega,\R)$ the space of real continuous functions $\psi: \Omega \rightarrow \R$ on $\Omega$.
The space of Radon measures can be defined through duality, see \cite{bredies2013inverse,Denoyelle2019,denoyelle2018_phdthesis,JAVANSHIRI2013}.
\begin{definition} 
\label{def_mx}
    The Banach space $\Mx$ of real signed Radon measures on $\Omega$ is the topological dual of $\mathscr{C}(\Omega,\R)$ endowed with the supremum norm $\|\cdot\|_{\infty,\Omega}$, defined by $\|\psi\|_{\infty,\Omega}:=\sup_{x\in\Omega}|\psi(x)|$.
\end{definition}

This definition allows to characterise any measure $\mu\in\Mx$ as a continuous linear form evaluated on continuous functions $\psi\in \mathscr{C}(\Omega,\R)$ in terms of the duality pairing, so that
    \begin{equation}
        (\forall\mu\in\Mx), \qquad \langle \psi,\mu\rangle_{\mathscr{C}(\Omega,\R)}\times\Mx=\int_\Omega \psi \diff \mu.
        \label{eq_prod_dual_misura}
    \end{equation}
A Radon measure $\mu\in\Mx$ is a positive measure if $\langle\psi,\mu\rangle_{\mathscr{C}(\Omega,\R)}\times\Mx$ is non-negative for any non-negative function $\psi\in \mathscr{C}(\Omega,\R)$. This specifies the meaning of the term \textit{signed} in the above definition, as the quantity $\langle\psi,\mu\rangle_{\mathscr{C}(\Omega,\R)}\times\Mx$ can be also negative.  

It can be shown that $\Mx$ is a non-reflexive Banach space endowed with the Total Variation (TV) norm, here defined for all $\mu\in\Mx$ as:
\begin{equation} \label{eq:TV_measure}
    |\mu|(\Omega)=\sup \Big( \int_\Omega \psi \diff \mu ~\Big| ~\ \psi\in\mathscr{C}(\Omega,\R),\ \|\psi\|_{\infty,\Omega}\le 1\Big).
\end{equation}
The TV norm is convex and lower semi-continuous with respect to the weak* topology, hence its subdifferential is nonempty and defined as \cite{bredies2013inverse,duval2014exact}
\begin{equation}
    \partial |\mu|(\Omega)=\left\{\psi\in\mathscr{C}(\Omega,\R)|\ \|\psi\|_{\infty,\Omega}\le 1 \text{ and } \int_\Omega \psi\diff \mu=|\mu|(\Omega)\right\}.
    \label{eq_subdiff_TV}
\end{equation}
Note that for sparse discrete measures, i.e. weighted sums of Diracs
    \begin{equation}
        \mu_{a, x} = \sum_{i=1}^N a_i \delta_{x_i}  \text{ with } N \in \mathbb{N},\ a=(a_1,\dots,a_N) \in \R^N,\ x=(x_1,\dots,x_N) \in \Omega^N,
        \label{eq_discrete_measures}
    \end{equation}
    with $x_i\not=x_j$ for $i\not=j$, 
the TV norm coincides with the $L^1$ norm of the amplitudes vector $a$, that is $|\mu_{a,x}|(\Omega)=\|a\|_1$.
This explains why the TV norm is considered in this setting as a generalisation of the $L^1$ norm. Moreover, in this special case one has $\,\langle \psi,\mu\rangle=\sum_{i=1}^N a_i \psi(x_i)\,$ and the subdifferential has the following expression involving the sign function
\begin{equation}
    \partial |\mu_{a,x}|(\Omega)=\left\{\psi\in\mathscr{C}(\Omega,\R)|\ \|\psi\|_{\infty,\Omega}\le 1,\ \forall i=1,\ldots,N\ \psi(x_i)=\sign(a_i)\right\}.
    \label{eq_subdiff_TV_discrete}
\end{equation}

We focus now on the formulation of linear inverse problems in $\Mx$. As an acquisition space we will consider a \red{functional} Hilbert space $\mathcal{H}$. 
Let $\mu \in \Mx$ be the unknown source measure. We consider an acquisition $\bar{y} \in \mathcal{H}$ being the result of the action of the forward operator $\Phi: \Mx \rightarrow\mathcal{H}$ evaluated on $\mu$, i.e. $\bar{y}=\Phi\mu$. The forward operator $\Phi: \Mx \rightarrow\mathcal{H}$ is defined in terms of a a continuous and bounded measurement kernel $\varphi: \Omega \rightarrow \mathcal{H}$, that is: 
\begin{equation}
    \Phi \mu:=\int_\Omega \varphi(x) \diff \mu(x).
    \label{eq_IP_offgrid}
\end{equation}
Note that this integral should not be confused with the concept of duality pairing \eqref{eq_prod_dual_misura}, which is a scalar function defined as the integral over $\Omega$ of a continuous real function with respect to a measure $\mu\in\Mx$. In \eqref{eq_IP_offgrid}, the integral is then a Bochner integral \cite{cohn2013measure} for vector-valued functions. Indeed, $\varphi$ is not a continuous real function in $\mathscr{C}(\Omega,\R)$, but rather $\varphi:x\in\Omega\mapsto \varphi(x)=\varphi_x(\cdot)\in\mathcal{H}$ is a map from $\Omega$ to $\mathcal{H}$. This implies that $\varphi(x)$ is not a real value but an element of $\mathcal{H}$. The integral is well-defined (as a Bochner integral) if $\varphi$ is continuous and bounded \cite{Chizat2018OnTG,Denoyelle2019}.

In the following, \red{we consider $\Mx$ with the weak* topology and its dual space $\mathscr{C}(\Omega,\R)$. In this setting, the forward operator $\Phi$ is weak*-weak continuous \cite{Denoyelle2019}.} Thus, it is possible to define the adjoint operator of $\Phi$ in the weak* topology, namely the map $\Phi^*:\mathcal{H}\longrightarrow \mathscr{C}(\Omega,\R)$ such that, for all $p\in\mathcal{H}$, $\Phi^*p$ is the real continuous function defined by
\begin{equation}
    \Phi^*p(x):=\Big(x\in\Omega\mapsto\langle p,\varphi_x(\cdot)\rangle_\mathcal{H}=\int_\Omega p(t)\varphi_x(t)\diff t\Big)
    \label{eq_phi_adjoint}
\end{equation}
for all $x\in\Omega$.

The choice of the kernel $\varphi$ and of the acquisition space $\mathcal{H}$ depends on the specific physical acquisition process. In the following, we consider a convolution kernel, which is of practical interest in fluorescence microscopy (see, e.g., \cite{Denoyelle2019,Koulouri_2021_AdaptiveSuperRes}). In this setting, a natural choice is therefore $\mathcal{H}=L^2(\Omega)$, with the convolution kernel $\varphi:\Omega\rightarrow L^2(\Omega)$ defined in terms of a Point Spread Function (PSF) $\tilde{\varphi}:\Omega\rightarrow \R$ acting as:
    \begin{equation}
        \varphi_x(s) := \tilde{\varphi}(s-x)\in\R  \ \forall x,s\in{\Omega}.
        \label{eq_conv_op}
    \end{equation}
Note that, depending on the microscopy technique used, one can have different PSFs. For instance, the Gaussian PSF, centred in $c \in \Omega$ with radius $\sigma>0$, is defined by 
    \begin{equation}
        s \mapsto \tilde{\varphi}(s-c) := {(2 \pi \sigma^2)}^{-d / 2} e^{-\|s-c\|_2^2 / 2 \sigma^2}.
    \end{equation}
For other possible choices of measurement kernels, we refer the reader to \cite{Denoyelle2019}.

We observe that the action of the forward operator $\Phi$ on finite linear combination of Diracs \eqref{eq_discrete_measures} can be explicited as follows: 
\begin{equation}
    \Phi \mu_{a, x} = \int_\Omega \varphi(x)\diff \mu(x)=\sum_{i=1}^N a_i\varphi(x_i).
    \label{eq_conv_operator}
\end{equation}
For simplicity, the following notation will therefore be used in the sequel $\Phi_x(a)=\sum_{i=1}^N a_i\varphi(x_i)$ to denote $\Phi \mu_{a, x}$.

\smallskip

In fluorescence microscopy imaging applications, the objects of interest are usually images of molecules, i.e. single point-sources emitting fluorescent light. The unknown is therefore well-described by  non-negative discrete measures of the form \eqref{eq_discrete_measures}.
Other possible objects of interest in this field are microtubules, 1-dimensional curve structures \cite{LavilleSIAMIS2023,LavilleSSVM2023}, and cells (2-dimensional), that can be modelled as piece-wise constant functions \cite{DeCastro2023,DeCastroPetit2021,DeCastro2024exact}. For simplicity, we will focus in the following only on inverse problems aiming at recovering (0-dimensional) measures of the form \eqref{eq_discrete_measures}. The extension to more general regularisation models is left for future work. 

\subsection{The BLASSO problem: formulation, duality and optimality conditions}
 \label{sec_blasso}
 
The standard sparse spike deconvolution problem, consists in recovering a (small) finite linear combination of Diracs $\mu_{a, x} = \sum_{i=1}^N a_i \delta_{x_i}$ from a blurred and noisy acquisition
\begin{equation} 
    y=\Phi \mu_{a, x}+\omega,
    \label{eq_ip_offthegrid_gaussian}
\end{equation}
where $\Phi:\Mx\rightarrow L^2(\Omega)$ is the forward operator \eqref{eq_IP_offgrid}, and $\omega$ is an additive noise component, typically describing white Gaussian noise. 
The variational formulation of the problem of retrieving an estimate $\mu$ from $y$ is 
\begin{equation}
\argmin_{\mu\in\Mx}~T_\lambda(\mu)\quad\text{with} \quad T_\lambda(\mu):=\frac{1}{2}\|\Phi\mu-y\|^2+\lambda |\mu|(\Omega), \quad \lambda>0 \tag{$L^2-|\cdot|$}
    \label{eq:blasso}
\end{equation}
which is the minimisation problem of the Beurling-LASSO (BLASSO) functional, named after the work of the mathematician Beurling  \cite{beurling1938}. It is considered the generalisation of the discrete LASSO variational problem
\begin{equation}
    \argmin_{x\in\R^N} \frac{1}{2}\|\Tilde{\Phi} x-y\|^2+\lambda\|x\|_1
    \label{eq_lasso},
    \tag{$L^2-L^1$}
\end{equation}
where $\Tilde{\Phi}\in\R^{M\times N}$ is a discretisation of $\Phi:\Mx\rightarrow\mathcal{H}$ and $y\in\R^M$.
In \cite{bredies2013inverse}, the functonal $T_\lambda:\Mx\rightarrow\R$
is proved to be proper, convex and coercive, which guarantees the existence of solutions of \eqref{eq:blasso} and uniqueness under injectivity  on $\Phi$.

By optimality, $\mu_\lambda$ minimises $T_\lambda$ if and only if $0\in\partial T_\lambda(\mu_\lambda)$, i.e.
\begin{equation}
     0\in\partial T_\lambda(\mu_\lambda)=\Phi^*(\Phi\mu_\lambda-y)+\lambda\partial|\mu_\lambda|(\Omega) \, \iff \, \frac{1}{\lambda} \Phi^*(y-\Phi\mu_\lambda)\in\partial|\mu_\lambda|(\Omega),
\end{equation}
which can be written as 
\begin{equation}
   \eta\in\partial |\mu_\lambda|(\Omega)\qquad \text{with} \qquad \eta:=\frac{1}{\lambda} \Phi^*(y-\Phi\mu_\lambda),
    \label{eq_certificate}
\end{equation}
where $\eta$ is the so-called dual certificate of \eqref{eq:blasso}, see \cite{duval2014exact}.
The dual certificate formally generalises the concept of Euler equation, playing a crucial role in the characterisation of optimality conditions for \eqref{eq:blasso} and in devising optimisation algorithms in this setting, as better specified in the following. If $\mu_\lambda$ is a finite linear combination of Dirac masses \eqref{eq_discrete_measures}, recalling \eqref{eq_subdiff_TV_discrete}, we get that optimality conditions \eqref{eq_certificate} become simply 
\begin{equation}
    \eta(x_i)=\sign(a_i) \quad \wedge \quad \|\eta\|_\infty\le 1.
    \label{eq_opt_cond_blasso}
\end{equation}

Note that the optimality conditions \eqref{eq_opt_cond_blasso} can be also derived by studying the dual problem of \eqref{eq:blasso}:
\begin{equation}    \argmax_{\|\Phi^*p\|_{\infty,\Omega}\le 1} \langle y,p\rangle -\frac{\lambda}{2}\|p\|^2,
    \label{dual_probl_blasso}
\end{equation}
which provides further insights to the meaning of the dual certificate $\eta$. 
Given $\mu_\lambda\in\Mx$ solution of the minimisation problem \eqref{eq:blasso} and denoting by $p_\lambda\in L^2(\Omega)$ the solution of the dual problem \eqref{dual_probl_blasso}, the following  extremality conditions hold true
\begin{equation}
    \begin{cases}
        &\Phi^*p_\lambda\in\partial |\mu_\lambda|(\Omega),\\
        &-p_\lambda = \frac{1}{\lambda}\left(\Phi \mu_\lambda -y\right)
    \end{cases}.
    \label{extr_cond_blasso}
\end{equation}
From \eqref{extr_cond_blasso}, we retrieve in fact the optimality conditions \eqref{eq_opt_cond_blasso} expressed in terms of the dual certificate \eqref{eq_certificate}. In addition, when the dual certificate satisfies \eqref{eq_opt_cond_blasso}, by duality we have $\eta = \Phi^*p_\lambda$ with $p_\lambda$ being a solution of \eqref{dual_probl_blasso}.
Optimality conditions thus fully characterise the solution(s) of the BLASSO problem \eqref{eq:blasso}. They are indeed crucial in devising algorithms for its minimisation and, in particular, in the definition of good stopping criterions.

\section{Off-the-grid Poisson inverse problems}

We now present the signal-dependent Poisson modelling studied in this work, which differs from \eqref{eq_ip_offthegrid_gaussian} as it is not additive. 

We recall that in the case of a finite-dimensional image and measurement setting, where a linear forward operator $A\in\mathbb{R}^{M\times N}$ is such that 
\begin{equation} \label{eq:Poisso_discrete_posOp}
Ax\geq 0\quad\forall x\in\mathbb{R}^N_{\geq 0}
\end{equation}
and is used in combination with a positive background term $\epsilon\in\mathbb{R}^m_{>0}$, the non-negativity assumption
\begin{equation} \label{eq:Poisson_discrete}
\mathbb{R}^m_{\geq 0}\ni y = \mathcal{P}(Ax+\epsilon)
\end{equation}
means that for each $m=1,\ldots,M$, the $m-$th element $y_m$ is a random realisation of a uni-variate Poisson random variable with mean $(Ax)_m + \epsilon_m$ depending, in particular, on the unknown vector $x$\footnote{Note that the extension from a discrete support to the real-line can be done using the Gamma function.}. Moving from a discrete to a \red{continuous} settings requires some attention, since there is no clear infinite-dimensional interpretation of \eqref{eq:Poisson_discrete}.

First, let us consider $\Phi:\Mx\rightarrow L^2(\Omega)$ to be the forward operator in \eqref{eq_IP_offgrid} with $\mathcal{H}=L^2(\Omega)$ and let assume that $\Phi$ has  the following property (analogue to \eqref{eq:Poisso_discrete_posOp}) 
\begin{equation}
    \mu\in\Mxpos \text{ positive measure } \Rightarrow \Phi\mu(x)\ge 0 \quad \text{a .e. } x\in\Omega,
    \label{eq_phi_pos_def}
\end{equation}
that is $\Phi(\Mxpos)=L^2(\Omega)^+$, where $L^2(\Omega)^+=\{f\in L^2(\Omega) \text{ such that } f(x)\ge0 \text{ a .e. } x\in\Omega\}$. With a slight abuse of notation, we will say that $\Phi$ is a positive operator in the sense specified by \eqref{eq_phi_pos_def}.
Observe that whenever the measurement kernel $\varphi$ \eqref{eq_IP_offgrid} is non-negative, this always holds. This is the case, for instance, of standard image deblurring problems.

After introducing a strictly positive background correction term $b\in L^2(\Omega)^+$, we now want to interpret $y$ as an element of an infinite-dimensional function space, so that $y\in L^2(\Omega)$. This may sound not natural given the discrete support property of the Poisson distribution. However, as it has been done in several previous works, see, e.g., \cite{Bertero2009ImageDW,Bertero2010ADP,LiShen2015,Sawatzky2013,Lanza2014}, it is quite natural to extend Poisson variables to be element of real function spaces to allow  finer analysis. Note that a different approach covering a discrete measurement space for Poisson measurements  in the framework of grid-less reconstructions is considered in \cite{Pouchol2024}.

A common choice as a data term in the presence of Poisson data is the Kullback-Leibler divergence which we define in the following on $L^2(\Omega)^+$.
\begin{definition}
    The Kullback-Leibler divergence $\kl:L^2(\Omega)^+\times L^2(\Omega)^+\longrightarrow\R$ is defined by 
\begin{equation} 
    \kl(s,t):=\int_\Omega s(x)-t(x)+t(x)\Big(\log(t(x))-\log(s(x))\Big) \ \diff x. 
    \label{eq_kl}
\end{equation}
\end{definition}
Note that to consider $\kl(\Phi\mu+b,y)$ using the definition above, two issues have to be considered:
\begin{itemize}
    \item if $\mu$ is not a positive measure, then $\Phi\mu+b$ might not be positive;
    \item the noisy acquisition $y$ might still vanish in a non negligible region of the domain $\Omega$. This is due to the fact that a Poisson random variable with mean $\alpha$ vanishes with positive probability equal to $e^{-\alpha}$. For \eqref{eq_kl} to be well defined, it is thus required that $y>0$ almost everywhere.
\end{itemize}
To solve the first issue, we introduce the function
$\dkl:L^2(\Omega)\times L^2(\Omega)^+\longrightarrow\R\cup\{+\infty\}$ which extends \eqref{eq_kl} as 
\begin{equation}
\dkl(s,t)=
    \begin{cases}
        \kl(s,t) & s\in L^2(\Omega)^+\\
        +\infty & s\not\in L^2(\Omega)^+
    \end{cases}.
    \label{eq_dkl}
\end{equation}
Moreover, we just restrict our study to positive acquisitions $y$, thus requiring
\begin{equation}
    y\in L^2(\Omega)^+.
    \label{eq_y_pos}
\end{equation}
Under assumptions \eqref{eq_phi_pos_def} and \eqref{eq_y_pos} and using \eqref{eq_dkl}, the quantity $\dkl(\Phi\mu+b,y)$ is therefore well-defined for all $\mu\in\Mx$.

We thus consider the following grid-less Poisson reconstruction model:
\begin{equation}
   \boxed{ \argmin_{\mu\in\Mx} ~\dkl(\Phi\mu+b,y)+\lambda|\mu|(\Omega)+\iota_{\{\Mxpos\}}(\mu), \quad \lambda>0,
    \tag{$\dkl-|\cdot|$}}
    \label{poissonblasso}
\end{equation}
where the Poisson fidelity \eqref{eq_dkl} is coupled with the TV norm, together with the indicator function of the positive measures $\Mxpos$ defined by
\begin{equation}
\iota_{\{\Mxpos\}}(\mu)=\begin{cases}
        0 & \mu\ge 0\\
        +\infty & \text{otherwise}
    \end{cases},
    \label{ind_funct}
\end{equation}
to ensure $\mu$ to be non-negative. 

Following \cite{bredies2013inverse}, where the existence and uniqueness of the solution of \eqref{eq:blasso} is proved under certain conditions, we can state here a similar result for \eqref{poissonblasso}.
\begin{prop}
\label{poissonblasso_uniqueness}
    The minimisation problem \eqref{poissonblasso} admits a solution $\hat{\mu}\in\Mxpos$ if $\Phi:\Mx\longrightarrow L^2(\Omega)$ is weak* continuous. Moreover, the solution is unique if $\Phi$ is injective.
\end{prop}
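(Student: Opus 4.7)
The plan is to argue existence by the direct method and uniqueness by strict convexity, mirroring the BLASSO analysis in \cite{bredies2013inverse} but adapting to the non-quadratic Poisson fidelity.

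\textbf{Existence.} First I would check that the functional
\[
F(\mu):=\dkl(\Phi\mu+b,y)+\lambda|\mu|(\Omega)+\iota_{\{\Mxpos\}}(\mu)
\]
is proper and bounded below. Properness follows from taking $\mu=0$, which yields $F(0)=\dkl(b,y)<+\infty$ thanks to $b,y\in L^2(\Omega)^+$ with $b,y>0$. Boundedness below by $0$ follows from the pointwise inequality $s-t+t(\log t-\log s)\ge 0$ for $s,t>0$, which makes the integrand of \eqref{eq_kl} non-negative. Next, given a minimising sequence $\{\mu_n\}$, the bound $\lambda|\mu_n|(\Omega)\le F(\mu_n)\le C$ shows that $\{\mu_n\}$ is bounded in TV norm, so Banach--Alaoglu (applied to the dual of the separable space $\mathscr{C}(\Omega,\R)$) yields a subsequence with $\mu_{n_k}\rightharpoonup^*\hat\mu$ in $\Mx$.

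\textbf{Lower semi-continuity.} I would then verify that each of the three terms defining $F$ is weak*-lsc. The TV norm is weak*-lsc by the very definition \eqref{eq:TV_measure} as a supremum of weak*-continuous linear forms; the indicator $\iota_{\{\Mxpos\}}$ is weak*-lsc because $\Mxpos$ is weak*-closed (for any non-negative $\psi\in\mathscr{C}(\Omega,\R)$, $\langle\psi,\hat\mu\rangle=\lim\langle\psi,\mu_{n_k}\rangle\ge 0$, so $\hat\mu\in\Mxpos$); this also ensures, via \eqref{eq_phi_pos_def}, that $\Phi\hat\mu+b\in L^2(\Omega)^+$. For the data term, the weak*-weak continuity of $\Phi$ gives $\Phi\mu_{n_k}+b\rightharpoonup\Phi\hat\mu+b$ in $L^2(\Omega)$; the map $s\mapsto\dkl(s,y)$ is convex on $L^2(\Omega)$ (with value $+\infty$ outside $L^2(\Omega)^+$) and strongly lsc (Fatou's lemma applied to any subsequence converging almost everywhere), hence weakly lsc by convexity. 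Combining these three facts gives $F(\hat\mu)\le\liminf_k F(\mu_{n_k})$, and $\hat\mu$ is a minimiser.

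\textbf{Uniqueness.} Assuming $\Phi$ injective, let $\hat\mu_1,\hat\mu_2$ be minimisers and set $\hat\mu:=\tfrac{1}{2}(\hat\mu_1+\hat\mu_2)$. By convexity of every term, $\hat\mu$ is also a minimiser. Since $y>0$ almost everywhere, the integrand $s\mapsto s-y\log s$ is strictly convex on $\{s>0\}$, so $s\mapsto\dkl(s,y)$ is strictly convex on $L^2(\Omega)^+$. If $\Phi\hat\mu_1\ne\Phi\hat\mu_2$ in $L^2(\Omega)$, strict convexity gives the strict inequality
\[
\dkl(\Phi\hat\mu+b,y)<\tfrac{1}{2}\bigl(\dkl(\Phi\hat\mu_1+b,y)+\dkl(\Phi\hat\mu_2+b,y)\bigr),
\]
while the two remaining terms are at most the average of their values at $\hat\mu_1,\hat\mu_2$; summing contradicts the minimality of $\hat\mu_1,\hat\mu_2$. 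Hence $\Phi\hat\mu_1=\Phi\hat\mu_2$, and injectivity of $\Phi$ yields $\hat\mu_1=\hat\mu_2$.

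The main obstacle I expect is the weak*-lsc of the Poisson fidelity: one has to combine the weak*-weak continuity of $\Phi$ (to move the sequence into $L^2$), the convexity of $\dkl$ in its first argument, and its strong lsc (via Fatou) to obtain weak lsc in $L^2$. Once this is settled, the rest of the argument is routine direct method plus strict convexity.
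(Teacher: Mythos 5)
Your proposal is correct and follows essentially the same route as the paper: existence by the direct method (properness, coercivity via the non-negativity of the Kullback--Leibler integrand, and sequential weak* lower semi-continuity of each of the three terms, the data term being handled through the weak*-weak continuity of $\Phi$ combined with convexity and strong lower semi-continuity of $\dkl(\cdot,y)$), and uniqueness from the strict convexity of the fidelity composed with an injective $\Phi$. The paper states these facts more tersely; you have simply filled in the standard details.
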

\begin{proof}
    Observe that the functional $T_\lambda^{KL}(\mu):=\dkl(\Phi\mu+b,y)+\lambda|\mu|(\Omega)+\iota_{\{\Mxpos\}}(\mu)$ is proper and coercive.
    Moreover, the mapping $w\mapsto \dkl(w,y)$ is convex and sequentially weak* lower semi continuous. The norm in $\Mx$ and the indicator function $\mu\mapsto \iota_{\{\Mxpos\}}(\mu)$ are known to be sequentially lower semi-continuous in the weak* sense. All these considerations together yield the sequential weak* lower semi continuity of $T_\lambda^{KL}$. Hence, a minimising argument $\hat{\mu}\in\Mxpos$ does exist. Finally, an injective $\Phi$ results in a strictly convex $T_\lambda^{KL}$, which immediately gives the claimed uniqueness.
\end{proof}

\subsection{Dual problem and optimality conditions}
We analyse in the following the dual problem \cite{rockafellar1970convex,Ekeland1999} of \eqref{poissonblasso} and provide an analytical expression of the convex conjugate of the involved functions.  

The study of the dual problem of  the problem \eqref{poissonblasso} requires the computation of the convex conjugate $F^*:\mathscr{C}(\Omega,\R)\rightarrow\R\cup\{+\infty\}$ of the penalty term
\begin{equation}
    F:\Mx\rightarrow\R\cup\{+\infty\}, \quad F(\cdot):=|\cdot|(\Omega)+\iota_{\{\Mxpos\}}(\cdot),
    \label{eq_penalty_poisson}
\end{equation}
where $\Mx$ is endowed with the weak* topology,
and of the convex conjugate $G^*:L^2(\Omega)\rightarrow\R\cup\{+\infty\}$ of the fidelity term 
\begin{equation}
    G:L^2(\Omega)\rightarrow\R\cup\{+\infty\}, \quad G(\cdot):=\frac{1}{\lambda}\dkl(\cdot,y).
    \label{eq_fidelity_poisson_blasso}
\end{equation}

To compute the convex conjugate of the Kullback-Leibler functional \eqref{eq_fidelity_poisson_blasso}, we start  considering 
 the one-dimensional Kullback-Leibler function, defined by
\begin{equation}
    g_t(s)=\frac{1}{\lambda}\big(s-t+t\log(t)-t\log(s)\big), \qquad s,t>0 \text{ and } \lambda>0.
\end{equation}
Applying the definition of convex conjugate to $g_t$ yields
\begin{align*}
    g_t^*(s^*)&=\sup_{s>0}~ss^*-g_t(s)= \sup_{s>0}~ss^*-\frac{1}{\lambda}\Big(s-t+t\log(t)-t\log(s)\Big)=\\
    &=\sup_{s>0} ~\underbrace{s\Big(s^*-\frac{1}{\lambda}\Big)+\frac{t}{\lambda}\log(s)+\frac{t}{\lambda}-\frac{t}{\lambda}\log(t)}_{h(s)}.
\end{align*}
We have two cases:
\begin{itemize}
    \item[(i)] If $s^*\ge\frac{1}{\lambda}$, then $\lim_{s\to +\infty}h(s)=+\infty$ implies $\sup_{s>0} h(s)=+\infty\Rightarrow g_t\big(s^*\big)=+\infty$ for all $t>0$.
    \item[(ii)] If $s^*<\frac{1}{\lambda}$, then $\lim_{s\to +\infty}h(s)=\lim_{s\to 0^+}h(s)=-\infty$. Thus, being $h$ a convex and differentiable function its supremum is attained at $\hat{s}$ such that $h'(\hat{s})=0$, which can be computed by 
    \begin{align*}
    & h'(\hat{s})=s^*-\frac{1}{\lambda}+\frac{t}{\lambda \hat{s}}=\frac{\lambda \hat{s}s^*-\hat{s}+t}{\lambda \hat{s}}=0\\
   & \iff \lambda \hat{s}s^*-\hat{s}+t=0 \iff \hat{s}=\frac{t}{1-\lambda s^*}.
    \end{align*}
    Thus, 
    \begin{equation}
        g_t^*(s^*)=h\Big(\frac{t}{1-\lambda s^*}\Big)=-\frac{t}{\lambda}\log(1-\lambda s^*).
    \end{equation}
    Observe that $g_t^*(s^*)$ is well defined since $s^*<\frac{1}{\lambda}$. 
\end{itemize}
Hence, the convex conjugate $g_t^*$ of $g_t$ is 
\begin{equation}
    g^*_t(s^*)=
    \begin{cases}
        +\infty & s^*\ge \frac{1}{\lambda}\\
        -\frac{t}{\lambda}\log(1-\lambda s^*) & s^*<\frac{1}{\lambda}
    \end{cases}\,.
    \label{conv_conj_dkl_1d}
\end{equation}
Since $\dkl(\cdot,t)$ is defined also for non-positive functions, its one-dimensional counterpart is given by  $\Tilde{g}_t:\R\longrightarrow\R\cup\{+\infty\}$ where
\begin{equation}
    \Tilde{g}_t(s)=
    \begin{cases}
        g_t(s) & s>0\\
        +\infty & s\le0
    \end{cases} \,,
\end{equation}
so that its convex conjugate coincides with \eqref{conv_conj_dkl_1d}.
We thus have the following lemma.
\begin{lemma}
Let $G:L^2(\Omega)\rightarrow\R\cup\{+\infty\}$ be the function defined by 
\begin{equation}
    G(\cdot):=\frac{1}{\lambda}\dkl(\cdot,y),
\end{equation}
where $\dkl$ is given by \eqref{eq_dkl}.  The convex conjugate of $G$ is given by $G^*:L^2(\Omega)\rightarrow \R\cup\{+\infty\}$ defined as
\begin{align}
    G^*(s^*)=\begin{cases}
        \langle -\frac{y}{\lambda},\log(\mathbf{1}-\lambda s^*)\rangle & s^*(t)< \frac{1}{\lambda} \ \text{a.e.}\\
        +\infty & \text{otherwise}\\
    \end{cases}
    \label{eq_conv_conj_dkl}
\end{align}
where $\langle -\frac{y}{\lambda},\log(\mathbf{1}-\lambda s^*)\rangle=\int_\Omega -\frac{y(t)}{\lambda}\log(1-\lambda s^*(t))\diff t \in\R$, and $\mathbf{1}$ denotes the map defined by: $t\mapsto 1$ a.e.

\end{lemma}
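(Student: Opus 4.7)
The plan is to exploit the fact that $G$ is an integral functional whose one-dimensional pointwise conjugate $\tilde g^*_t$ has already been computed explicitly in \eqref{conv_conj_dkl_1d}, and to promote that calculation to the $L^2$ setting by interchanging the supremum with the integral. Starting from the Fenchel--Legendre definition,
\begin{equation}
G^*(s^*) \,=\, \sup_{s\in L^2(\Omega)}\bigl(\langle s,s^*\rangle - G(s)\bigr) \,=\, \sup_{s\in L^2(\Omega)^+}\int_\Omega \bigl(s(t)s^*(t) - \tilde g_{y(t)}(s(t))\bigr)\diff t,
\end{equation}
where the second equality uses that $\dkl(s,y)=+\infty$ for $s\notin L^2(\Omega)^+$ and, on $L^2(\Omega)^+$, $G$ reduces to the integral of the pointwise densities $\tilde g_{y(t)}$. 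The goal is then to apply a Rockafellar-type interchange result for integral functionals of normal convex integrands (see, e.g., Rockafellar--Wets, \emph{Variational Analysis}, Thm.~14.60), which collapses the computation onto the pointwise conjugates $\tilde g^*_{y(t)}(s^*(t))$.

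The analysis naturally splits into two cases, matching the two branches in \eqref{eq_conv_conj_dkl}. If $A:=\{t\in\Omega:\ s^*(t)\ge 1/\lambda\}$ has positive Lebesgue measure, I would exhibit an explicit diverging sequence: taking $s_n:=n\mathbf{1}_B$ for $B\subseteq A$ of finite positive measure, the integrand grows asymptotically like $n(s^*(t)-1/\lambda)$ minus a logarithmic correction proportional to $y(t)/\lambda$, so its integral over $B$ diverges to $+\infty$, giving $G^*(s^*)=+\infty$. If instead $s^*(t)<1/\lambda$ a.e., the interchange yields
\begin{equation}
G^*(s^*) \,=\, \int_\Omega \tilde g^*_{y(t)}(s^*(t))\diff t \,=\, -\int_\Omega \frac{y(t)}{\lambda}\log\bigl(1-\lambda s^*(t)\bigr)\diff t,
\end{equation}
which is exactly the claimed duality pairing $\langle -y/\lambda,\log(\mathbf 1-\lambda s^*)\rangle$.

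The main obstacle is the rigorous justification of the sup/integral exchange. The pointwise maximiser $\hat s(t) = y(t)/(1-\lambda s^*(t))$ computed in the stationarity analysis is non-negative and measurable, and it is in $L^2(\Omega)^+$ whenever $s^*$ is essentially bounded away from $1/\lambda$, but can fail to be square integrable when $\lambda s^*$ approaches $1$ on a non-negligible set. I would handle this by truncation: replace $\hat s$ with $\hat s\wedge n\in L^2(\Omega)^+$, which is admissible for every $n$, and recover the desired identity by monotone convergence on the integrand together with lower semicontinuity of the supremum. Measurability and the normal-integrand property of $(t,r)\mapsto \tilde g_{y(t)}(r)$ needed to invoke the interchange theorem follow from the explicit expression of $\tilde g$ and the measurability of $y\in L^2(\Omega)^+$.
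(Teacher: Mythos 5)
Your proof follows essentially the same route as the paper: both reduce the computation to the one-dimensional conjugate \eqref{conv_conj_dkl_1d} of the pointwise integrand $\tilde g_{y(t)}$ and then invoke a standard conjugate/integral interchange theorem for integral functionals (the paper cites \cite[Prop.~IX.2.1]{Ekeland1999} and verifies its hypothesis simply by noting $G(y)=\frac{1}{\lambda}\dkl(y,y)=0<+\infty$, whereas you cite the Rockafellar--Wets analogue and work out the truncation details by hand). One small correction to your supplementary divergence argument: $s_n=n\mathbf{1}_B$ vanishes on $\Omega\setminus B$, where $\tilde g_{y(t)}(0)=+\infty$, so $\langle s_n,s^*\rangle-G(s_n)=-\infty$ rather than $+\infty$; the sequence must stay strictly positive on all of $\Omega$, e.g.\ $s_n=y+n\mathbf{1}_B$.
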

\begin{proof}
    The computation of \eqref{eq_conv_conj_dkl} follows straightforwardly from the 1-dimensional case given by \eqref{conv_conj_dkl_1d}. In particular, we observe that $G(s)=\int_\Omega \Tilde{g}_{y(x)}\big(s(x)\big)\diff x$ and, thanks to a result from \cite[Prop.IX.2.1]{Ekeland1999}, we can conclude that $G^*(s^*)=\int_\Omega \Tilde{g}_{y(x)}^*\big(s^*(x)\big)\diff x$. Indeed, the function $G$ satisfies the hypothesis of the proposition in \cite{Ekeland1999} by considering $G(y)=\frac{1}{\lambda}\dkl(y,y)=0<+\infty$.
\end{proof}

We compute now the convex conjugate of $F$ defined in \eqref{eq_penalty_poisson}.

\begin{lemma}
    For $\psi\in\mathscr{C}(\Omega,\R)$, its convex conjugate $F^*:\mathscr{C}(\Omega,\R)\rightarrow\R\cup\{+\infty\}$  is defined by 
    \begin{equation}
    F^*(\psi)=\begin{cases}
        0 & \text{if} \ \psi(x)\le 1 \quad \forall x\in\Omega\\
        +\infty & \text{otherwise}
    \end{cases}.
    \label{eq_conv_conj_penalty_poisson}
\end{equation}
\end{lemma}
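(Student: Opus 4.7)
The plan is to compute the Fenchel--Legendre conjugate directly from its definition. By construction,
\[
F^*(\psi) \;=\; \sup_{\mu\in\Mx}\, \langle \psi,\mu\rangle - |\mu|(\Omega) - \iota_{\{\Mxpos\}}(\mu) \;=\; \sup_{\mu\in\Mxpos}\, \int_\Omega \psi\,\diff\mu - |\mu|(\Omega),
\]
and the key simplification is that on $\Mxpos$ the TV norm reduces to the total mass $|\mu|(\Omega)=\mu(\Omega)=\int_\Omega 1\,\diff\mu$. Rewriting the supremum as
\[
F^*(\psi) \;=\; \sup_{\mu\in\Mxpos}\, \int_\Omega (\psi - 1)\,\diff\mu
\]
makes the two regimes transparent.

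First I would handle the case $\psi(x)\le 1$ for all $x\in\Omega$. Then $\psi-1\le 0$ pointwise, and since $\mu$ is a positive measure, $\int_\Omega(\psi-1)\,\diff\mu\le 0$. Taking $\mu = 0\in\Mxpos$ achieves the value $0$, so the supremum equals $0$.

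Next I would treat the opposite case, assuming there exists $x_0\in\Omega$ with $\psi(x_0)>1$. Since $\psi$ is continuous on the compact set $\Omega$, we may in fact consider $x_0$ and set $\delta := \psi(x_0) - 1 > 0$. Testing with the family $\mu_n := n\delta_{x_0}\in\Mxpos$ for $n\in\N$ gives
\[
\int_\Omega (\psi-1)\,\diff\mu_n \;=\; n\,\delta \;\xrightarrow[n\to\infty]{}\; +\infty,
\]
so $F^*(\psi)=+\infty$.

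Combining the two cases yields exactly the expression \eqref{eq_conv_conj_penalty_poisson}. I do not anticipate any genuine obstacle: the only subtlety is making sure the reduction $|\mu|(\Omega)=\mu(\Omega)$ on $\Mxpos$ is invoked at the right moment, and that the Dirac test measures are legitimate elements of $\Mxpos$ (which is immediate).
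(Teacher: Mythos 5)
Your proof is correct and follows essentially the same route as the paper's: reduce the supremum to positive measures, use $|\mu|(\Omega)=\int_\Omega 1\,\diff\mu$ on $\Mxpos$, test with scaled Diracs $\alpha\delta_{x_0}$ to get $+\infty$ when $\psi$ exceeds $1$ somewhere, and use $\mu=0$ together with the pointwise bound to get $0$ otherwise. The only cosmetic difference is that you fold the two terms into $\int_\Omega(\psi-1)\,\diff\mu$ at the outset, which the paper does implicitly; the continuity/compactness remark is not actually needed.
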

\begin{proof}
By definition of convex conjugate \cite{Combettes2011,rockafellar1970convex}, for any $\psi\in\mathscr{C}(\Omega,\R)$ we write
    \begin{align}
        F^*(\psi)&=\sup_{\mu\in\Mx} \langle \psi,\mu\rangle_{\mathscr{C}(\Omega,\R)\times\Mx} -|\mu|(\Omega)-\iota_{\Mxpos}(\mu)\\
        &=\sup_{\mu\in\Mxpos} \langle \psi,\mu\rangle_{\mathscr{C}(\Omega,\R)\times\Mx} -|\mu|(\Omega)\\
        &\ge \langle \psi,\mu\rangle_{\mathscr{C}(\Omega,\R)\times\Mx} -|\mu|(\Omega) \quad \forall \mu\in\Mxpos.
    \end{align}
    If there exists $\bar{x}\in\Omega$ such that $\psi(\bar{x})>1$, by taking $\bar{\mu}=\alpha \delta_{\bar{x}}$ with $\alpha>0$ we obtain
    \begin{align}
        F^*(\psi)&\ge \langle \psi,\bar{\mu}\rangle_{\mathscr{C}(\Omega,\R)\times\Mx} -|\bar{\mu}|(\Omega)\\
        &=\alpha\psi(\bar{x})-\alpha=\alpha\Big(\psi(\bar{x})-1\Big),
    \end{align}
   and the limit for $\alpha\rightarrow +\infty$ of the latter inequality yields $F^*(\psi)=+\infty$. 

   Assume now $\psi(x)\le 1$ for all $x\in\Omega$. We observe that, since $\psi(x)\le 1$, for any positive $\mu\in\Mxpos$
   \begin{align}
       &\langle \psi,\mu\rangle_{\mathscr{C}(\Omega,\R)\times\Mx}=\int_\Omega \psi(x)\diff\mu(x)\le\int_\Omega 1 \diff \mu(x)=|\mu|(\Omega) \quad \forall\mu\in\Mxpos \\
      \Rightarrow & \langle\psi,\mu\rangle_{\mathscr{C}(\Omega,\R)\times\Mx}-|\mu|(\Omega)\le 0 \quad \forall\mu\in\Mxpos\\
      \Rightarrow &\sup_{\mu\in\Mxpos} \langle \psi,\mu\rangle_{\mathscr{C}(\Omega,\R)\times\Mx} -|\mu|(\Omega) \le 0 \Rightarrow F^*(\psi)\le 0.
   \end{align}
   Moreover, for $\mu\equiv 0$ we have $\langle \psi,\mu\rangle_{\mathscr{C}(\Omega,\R)\times\Mx} -|\mu|(\Omega)=0$. Hence, $F^*(\psi) = 0$. This concludes the proof of \eqref{eq_conv_conj_penalty_poisson}.
\end{proof}

\subsection{Dual problem}

The study of dual problems of sparse-regularisation models with non-negativity constraints has been carried out in \cite{Ndiaye2017,Wang2014} in the discrete setting of LASSO, and in \cite{Dantas2021} for the discrete counterpart of \eqref{poissonblasso}, that is with the Kullback-Leibler divergence as fidelity and the $L^1$ penalty. In the following we compute the dual problem of \eqref{poissonblasso} in $\Mx$, endowed with the weak* topology, by means of $F^*$ \eqref{eq_conv_conj_penalty_poisson} and $G^*$ \eqref{eq_conv_conj_dkl}, \red{exploiting results on duality that can be found in \cite{ekeland1999convex} and that are similarly applied in the context of BLASSO in \cite{Laville2021,poon2019sparse}. For readability purpose, we report the statement of the needed result with coherent notation.}

\begin{lemma}
    \red{Let $V$ be a locally convex vector space and let $Y$ be a Banach space.} For $\Lambda:V\rightarrow Y$ linear and continuous operator, $F:V\rightarrow\R$ and $G:Y\rightarrow \R$ convex functionals, we consider the following primal problem:
    \begin{equation}
        \argmin_{u\in V}~ F(u)+G(\Lambda u).
        \label{eq_primal_problem_general}
    \end{equation}
    The corresponding dual problem reads
    \begin{equation}
    \argmax_{p\in Y^*}-F^*(\Lambda^*p)-G^*(-p),
    \label{eq_dual_problem_general}
\end{equation}
where $\Lambda^*:Y^*\rightarrow V^*$ is the adjoint operator of $\Lambda$ and $F^*:V^*\rightarrow\R\cup\{+\infty\}$, $G^*:Y^*\rightarrow\R\cup\{+\infty\}$ are the convex conjugate of $F$ and $G$.

Moreover, if $u\in V$ and $p\in Y^*$ are respectively solutions of the primal \eqref{eq_primal_problem_general} and dual \eqref{eq_dual_problem_general} problems, the following extremality conditions hold: 
\begin{equation}
    \begin{cases}
        &\Lambda^*p\in\partial F(u)\\
        &-p\in\partial G(\Lambda u)
    \end{cases}.
    \label{extr_cond_gen}
\end{equation}
\label{lemma_dual_probl}
\end{lemma}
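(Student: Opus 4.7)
The plan is to derive both assertions from the classical Fenchel-Rockafellar duality machinery, since this is essentially the abstract duality statement of Ekeland-Temam specialised to a composite primal of the form $F + G\circ\Lambda$. I would first introduce the perturbation function $h:Y\to\R\cup\{+\infty\}$ defined by
\begin{equation}
h(z):=\inf_{u\in V} F(u)+G(\Lambda u + z),
\end{equation}
so that the primal value is $h(0)$. A direct computation of the Legendre transform of $h$, by exchanging the order of $\inf$ and $\sup$ in
\begin{equation}
h^*(p)=\sup_{u,z}\bigl\{\langle p,z\rangle - F(u)-G(\Lambda u+z)\bigr\},
\end{equation}
together with the change of variables $w=\Lambda u + z$, yields the separation $h^*(p)=F^*(-\Lambda^* p)+G^*(p)$. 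Rewriting $-h^*(-p)$ and replacing $p$ by $-p$ then produces exactly the dual functional $-F^*(\Lambda^* p)-G^*(-p)$ of \eqref{eq_dual_problem_general}, so the dual problem is $\sup_{p\in Y^*} -h^*(-p)$.

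Next I would invoke strong duality, i.e.\ the equality $\inf(P)=\sup(D)$, which holds whenever $h$ is lower semi-continuous at $0$; under the convexity of $F,G$ and the continuity assumed on $\Lambda$, standard Fenchel-Rockafellar hypotheses (e.g.\ one of $F$, $G\circ\Lambda$ being continuous at a feasible point, or a Slater-type qualification) ensure this. These assumptions are clearly satisfied in the downstream application to \eqref{poissonblasso}, where $\Lambda=\Phi$ is weak*-weak continuous and the two conjugates have been computed explicitly in \eqref{eq_conv_conj_dkl}, \eqref{eq_conv_conj_penalty_poisson}.

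For the extremality conditions I would exploit the Young-Fenchel inequalities
\begin{align}
F(u)+F^*(\Lambda^* p) &\ge \langle \Lambda^* p,u\rangle,\\
G(\Lambda u)+G^*(-p) &\ge \langle -p,\Lambda u\rangle = -\langle \Lambda^* p,u\rangle.
\end{align}
Summing the two lines gives $F(u)+G(\Lambda u)+F^*(\Lambda^* p)+G^*(-p)\ge 0$. When $u$ and $p$ are primal and dual optima respectively, strong duality forces this sum to vanish, hence both inequalities must be equalities. Using the characterisation that equality in Young-Fenchel is equivalent to subdifferential membership, the first equality yields $\Lambda^* p\in\partial F(u)$ and the second yields $-p\in\partial G(\Lambda u)$, which is \eqref{extr_cond_gen}.

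The main obstacle I anticipate is the topological setup: since $V$ is only locally convex (not Banach) and the statement is to be applied with $V=\Mx$ endowed with the weak* topology, one has to be careful that the bidual relation $h^{**}(0)=h(0)$ and the definition of $F^*:V^*\to\R\cup\{+\infty\}$ make sense in this pairing. I would therefore cite \cite{ekeland1999convex} for the locally convex version of Fenchel-Rockafellar duality and limit the novel content of the proof to the identification of the dual functional and to the extremality step, which goes through verbatim in any dual pair of locally convex spaces.
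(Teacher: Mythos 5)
Your argument is correct, but note that the paper does not actually prove this lemma: it is stated as a quoted result, imported verbatim from \cite{ekeland1999convex} (and from its use in \cite{Laville2021,poon2019sparse}), so there is no in-paper proof to match. What you have written is precisely the standard perturbation-function derivation underlying that reference: defining $h(z)=\inf_u F(u)+G(\Lambda u+z)$, computing $h^*(p)=F^*(-\Lambda^*p)+G^*(p)$ via the substitution $w=\Lambda u+z$, identifying the dual with $\sup_p -h^*(-p)$, and extracting the extremality conditions \eqref{extr_cond_gen} by forcing equality in the two Young--Fenchel inequalities once the duality gap vanishes; all the sign bookkeeping checks out against \eqref{eq_dual_problem_general}. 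The one point worth making explicit is the one you already flag: the lemma as stated gives no constraint qualification, yet your extremality step genuinely needs strong duality (a primal and a dual optimiser can coexist with a gap, in which case the sum of the two Fenchel inequalities is strictly positive and no subdifferential inclusion follows). So the proof is only complete once one verifies, as you propose, an Ekeland--Temam-type condition --- e.g.\ $G$ finite and continuous at some $\Lambda u_0$ with $F(u_0)<+\infty$ --- which does hold in the application to \eqref{poissonblasso}. Your attention to the locally convex pairing ($V=\Mx$ with the weak* topology, $V^*=\mathscr{C}(\Omega,\R)$) is also the right thing to worry about and is handled correctly.
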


By Lemma \ref{lemma_dual_probl},  the dual problem of \eqref{poissonblasso} can be obtained by plugging \eqref{eq_conv_conj_dkl} and \eqref{eq_conv_conj_penalty_poisson} into \eqref{eq_dual_problem_general}. We thus have:
\begin{align}
  &\argmax_{p\in L^2(\Omega)} ~-F^*(\Phi^*p)-G^*(-p)\\
  =&\argmax_{p\in L^2(\Omega)}~ -F^*(\Phi^*p)+\begin{cases}
       \langle\frac{y-b}{\lambda},\log(\mathbf{1}+\lambda p)\rangle & -p(x)<\frac{1}{\lambda} \quad \text{a.e.}  \ x\in\Omega \\
       -\infty & \text{otherwise}  
    \end{cases}\\
    =&\argmax_{p\in L^2(\Omega)} ~-F^*(\Phi^*p)+\begin{cases}
        \langle\frac{y-b}{\lambda},\log(\mathbf{1}+\lambda p)\rangle & p(x)>-\frac{1}{\lambda} \quad \text{a.e.}  \ x\in\Omega\\
         -\infty & \text{otherwise} \\
    \end{cases}\\
    =&\argmax_{p\in L^2(\Omega)\text{ s.t. }p>-\frac{1}{\lambda}} -F^*(\Phi^*p) + \left\langle\frac{y-b}{\lambda},\log(\mathbf{1}+\lambda p)\right\rangle\\
    =&  \argmax_{p\in L^2(\Omega)\text{ s.t. }p>-\frac{1}{\lambda}} \begin{cases}
        \;0 & \forall x\in\Omega, \;\, \Phi^*p(x)\le 1\\
        -\infty & \exists x\in\Omega, \;\, \Phi^*p(x)>1
    \end{cases}\quad + \quad \left\langle\frac{y-b}{\lambda},\log(\mathbf{1}+\lambda p)\right\rangle\\
    =&  \argmax_{p\in \mathcal{D}}~ \left\langle\frac{y-b}{\lambda},\log(\mathbf{1}+\lambda p)\right\rangle \,, 
    \label{dual_poissonblasso}
\end{align}
where $\mathcal{D}=\{ p\in L^2(\Omega): p(x)>-\frac{1}{\lambda} \; \text{a.e.}  \ x\in\Omega \text{ and }  \Phi^*p(x)\le 1 \ \forall x\in\Omega\}$.

\subsection{Extremality conditions}

By Fenchel-Rockafellar duality (Lemma \ref{lemma_dual_probl}), extremality conditions \eqref{extr_cond_gen} can be obtained. Given $\mu_\lambda\in\Mx$ solution of the primal problem \eqref{poissonblasso} with regularisation parameter $\lambda>0$ and $p_\lambda\in L^2(\Omega)$ solution of the dual problem \eqref{dual_poissonblasso}, they read
\begin{equation}
    \begin{cases}
       \Phi^*p_\lambda\in {\partial F(\mu_\lambda)=\partial\Big(|\mu_\lambda|(\Omega)+ \iota_{\Mxpos}(\mu_\lambda)\Big)}\\
       -p_\lambda\in\frac{1}{\lambda}\partial_1\dkl(\Phi\mu_\lambda+b,y)=\frac{1}{\lambda}\Big(\mathbf{1}-\frac{y}{\Phi\mu_\lambda+b}\Big)
    \end{cases},
    \label{eq_extr_cond_poisson}
\end{equation} where by $\mathbf{1}$ we denote again the map defined by: $t\mapsto 1$ a.e. We remark that the notation $\partial_1\dkl(\cdot,\cdot)$ denotes the subdifferential of $\dkl(\cdot,\cdot)$ computed with respect to the first variable and it is given by the following expression
\begin{equation}
    \partial_1\tilde{\mathcal{D}}_{KL}(s,t):=\begin{cases}
    \mathbf{1}-\frac{t}{s} & s\in L^2(\Omega)^+\\
    \emptyset & s\not\in L^2(\Omega)^+
    \end{cases}.
\end{equation}
When evaluated in $(\Phi\mu_\lambda+b,y)$, since $\Phi\mu_\lambda+b$ is always positive, the subdifferential is always non-empty.
\begin{oss}
If $\mu_\lambda\in\Mx$ is solution of the primal problem \eqref{poissonblasso} and $p_\lambda\in L^2(\Omega)$ is solution of the dual problem \eqref{dual_poissonblasso}, then from \eqref{eq_extr_cond_poisson} we have 
\begin{equation}
    -p_\lambda=\frac{1}{\lambda}\Big(\mathbf{1}-\frac{y}{\phi\mu_\lambda+b}\Big)\quad \Rightarrow  \quad p_\lambda=\frac{y-\Phi\mu_\lambda-b}{\lambda(\Phi\mu_\lambda+b)}.
    \label{p_lambda}
\end{equation}
It follows that ${p_\lambda>-\frac{1}{\lambda} \ {a.e.}\ \iff y>0}$, which holds by hypothesis \eqref{eq_y_pos}.  
\end{oss}

\medskip
\red{To have a complete analytical expression of the extremality conditions \eqref{eq_extr_cond_poisson}, we compute in the following proposition the subdifferential $\partial\big(|\mu|(\Omega)+\iota_{\Mxpos}(\mu)\big)$.}
\begin{prop}
\label{prop_partialF}
    The subdifferential of the penalty term $F$ in \eqref{eq_penalty_poisson} can be directly computed for all $\mu\in\Mx$ as
    \begin{equation}
        \partial F(\mu)=\begin{cases}
            \emptyset & \mu \not\in \Mxpos\\
            \{\psi \in \mathscr{C}(\Omega,\R)|\psi(x)\le 1 \ \forall x\in\Omega \text{ and } \psi(x)=1 \ \forall x\in\text{supp}(\mu)\} & \mu\in\Mxpos
        \end{cases}.
        \label{eq_partial_penalty}
    \end{equation}
\end{prop}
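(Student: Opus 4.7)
The plan is to use the Fenchel--Young characterisation of the subdifferential: since $F$ is convex, proper, and weak* lower semicontinuous (as sum of the TV norm and the indicator of the weak*-closed convex cone $\Mxpos$), we have $\psi\in\partial F(\mu)$ if and only if
\begin{equation}
F(\mu)+F^*(\psi)=\langle \psi,\mu\rangle_{\mathscr{C}(\Omega,\R)\times\Mx}.
\end{equation}
This avoids having to invoke a sum rule for the subdifferential (which would require a constraint qualification between $|\cdot|(\Omega)$ and the indicator $\iota_{\Mxpos}$), and lets me reuse the explicit formula for $F^*$ already computed.

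The case $\mu\notin\Mxpos$ is immediate: then $F(\mu)=+\infty$, and no $\psi\in\mathscr{C}(\Omega,\R)$ can satisfy the subgradient inequality (equivalently, the Fenchel--Young equality would require $\langle\psi,\mu\rangle=+\infty$), so $\partial F(\mu)=\emptyset$. For $\mu\in\Mxpos$, I combine the Fenchel--Young equality with the expression $F^*(\psi)=0$ if $\psi\le 1$ on $\Omega$ (and $+\infty$ otherwise) proven earlier. Finiteness of $F^*(\psi)$ immediately forces $\psi\le 1$ pointwise on $\Omega$, which is the first of the two conditions in \eqref{eq_partial_penalty}. The equality then reduces to
\begin{equation}
|\mu|(\Omega)=\langle\psi,\mu\rangle_{\mathscr{C}(\Omega,\R)\times\Mx},
\end{equation}
and since $\mu\in\Mxpos$ implies $|\mu|(\Omega)=\int_\Omega 1\,\diff\mu$, this is equivalent to $\int_\Omega (1-\psi)\,\diff\mu=0$.

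The main obstacle is the last step: converting this integral condition on the continuous nonnegative integrand $1-\psi$ into the pointwise statement $\psi\equiv 1$ on $\text{supp}(\mu)$. Here I use that the set $U:=\{x\in\Omega:\psi(x)<1\}$ is open by continuity of $\psi$, and that $1-\psi\ge 0$ together with $\mu\ge 0$ makes the integral vanish if and only if $\mu(U)=0$. By the topological definition of $\text{supp}(\mu)$ as the complement of the largest open set of $\mu$-measure zero, $\mu(U)=0$ is equivalent to $U\cap\text{supp}(\mu)=\emptyset$, i.e., $\psi(x)=1$ for every $x\in\text{supp}(\mu)$. The converse direction is straightforward: any $\psi$ satisfying the two conditions in \eqref{eq_partial_penalty} yields $F^*(\psi)=0$ and $\int_\Omega(1-\psi)\,\diff\mu=0$, so the Fenchel--Young equality holds and $\psi\in\partial F(\mu)$. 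This closes the characterisation.
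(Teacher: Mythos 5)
Your proof is correct, but it follows a different route from the paper's. The paper works directly from the definition of the subdifferential and extracts the two conditions by testing the subgradient inequality against explicit perturbations: $\bar{\mu}=\mu+\delta_x$ yields $\psi(x)\le 1$ for every $x\in\Omega$, and $\bar{\mu}=0$ yields $\psi\ge 1$ on $\mathrm{supp}(\mu)$, after which the reverse inclusion is checked by hand. You instead invoke the Fenchel--Young equality $F(\mu)+F^*(\psi)=\langle\psi,\mu\rangle$ and recycle the formula for $F^*$ already established in the preceding lemma, so the constraint $\psi\le 1$ comes for free from the finiteness of $F^*(\psi)$ (the Dirac test-measure argument is then used only once, inside the computation of $F^*$, rather than being repeated). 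The remaining condition $\int_\Omega(1-\psi)\,\diff\mu=0$ is the same in both arguments, but you make the passage to the pointwise statement $\psi\equiv 1$ on $\mathrm{supp}(\mu)$ fully explicit via the openness of $U=\{\psi<1\}$ and the characterisation of the support as the complement of the largest open $\mu$-null set; the paper asserts this equivalence without the topological justification, so your version is actually slightly more rigorous on that point (both directions implicitly use $\mu(\Omega\setminus\mathrm{supp}(\mu))=0$, which holds for Radon measures on the second-countable set $\Omega\subseteq\R^d$). In short, the paper's proof is self-contained and elementary, while yours is shorter modulo the earlier lemma and avoids duplicating the test-measure construction; both are valid, and neither needs a subdifferential sum rule.
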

\begin{proof}
    We start by recalling the definition of subdifferential:
    \begin{equation}
        \partial F(\mu)=\{\psi\in\mathscr{C}(\Omega,\R)| F(\bar{\mu})\ge F(\mu)+\langle \psi,\bar{\mu}-\mu\rangle \ \forall \bar{\mu}\in\Mx\}.
        \label{eq_partial_penalty_def}
    \end{equation}
    If $\mu\not\in\Mxpos$, then $F(\mu)=+\infty$ and hence the inequality of \eqref{eq_partial_penalty_def} will never be satisfied. Thus, $\partial F(\mu)=\emptyset$.

    We consider now a positive measure $\mu\in\Mxpos$. In this case, the penalty term $F$ reduces to $F(\mu)=|\mu|(\Omega)=\int_\Omega 1 \diff \mu=\langle 1,\mu\rangle$. If we take a measure $\bar{\mu}\not\in\Mxpos$ in \eqref{eq_partial_penalty_def}, we would have $F(\bar{\mu})=+\infty$ and the inequality would be automatically verified. Hence, in \eqref{eq_partial_penalty_def}, it is equivalent to require the inequality to be verified for all $\bar\mu\in\Mxpos$. Let now $\bar{\mu}\in\Mxpos$, for which we write $F(\bar{\mu})=\int_\Omega 1 \diff \bar{\mu}=\langle 1, \bar{\mu}\rangle$. Hence, \eqref{eq_partial_penalty_def} becomes
    \begin{align}
        \partial F(\mu) &= \{\psi\in\mathscr{C}(\Omega,\R)| \langle 1, \bar{\mu}\rangle\ge \langle 1, \mu\rangle+\langle \psi,\bar{\mu}-\mu\rangle \ \forall \bar{\mu}\in\Mxpos\} \\
        &=\{\psi\in\mathscr{C}(\Omega,\R)|\langle 1,\bar{\mu}-\mu\rangle\ge \langle \psi,\bar{\mu}-\mu\rangle \ \forall \bar{\mu}\in\Mxpos\}.
    \end{align}
    It is easy to deduce that $\psi(x)\le 1$ for all $x\in\Omega$. Indeed, by taking $\bar{\mu}=\mu+\delta_x$ with $x\in\Omega$, we have 
    \begin{equation}
        1=\int_\Omega 1 \diff \delta_x=\int_\Omega 1 \diff (\bar{\mu}-\mu)=\langle 1,\bar{\mu}-\mu\rangle\ge \langle \psi,\bar{\mu}-\mu\rangle=\int_\Omega \psi \diff\delta_x=\psi(x).
    \end{equation}
    By taking $\bar{\mu}=0$, we have
    \begin{align}
        & \langle 1,-\mu\rangle\ge \langle \psi,-\mu\rangle \iff 
         \langle \psi,\mu\rangle \ge \langle 1,\mu\rangle \iff 
          \int_\Omega \psi \diff \mu \ge \int_\Omega 1 \diff \mu,
    \end{align}
    that is true if and only if $\psi(x)\ge 1$ for all $x\in\text{supp}(\mu)$. Hence, $\psi(x)=1$ on the support of $\mu$.

    We showed that if $\psi\in\partial F(\mu)$ then $\psi(x)\le 1$ for every $x\in\Omega$ and $\psi(x)=1$ on the support of $\mu$. Showing the other inclusion is straightforward. Let $\psi\in\mathscr{C}(\Omega,\R)$ such that $\psi(x)\le 1$ for every $x\in\Omega$ and $\psi(x)=1$ on the support of $\mu$. Let $\bar\mu\in\Mxpos$. We need to show that $\psi\in\partial F(\mu)$, hence it satisfies the inequality of \eqref{eq_partial_penalty_def}. We observe that 
    \begin{align}
        & \langle\psi,\bar\mu\rangle = \int_\Omega \psi\diff \bar\mu\le\int_\Omega 1\diff\bar\mu=\langle 1,\bar\mu\rangle\\
        & \langle\psi,\mu\rangle = \int_\Omega \psi\diff \mu=\int_\Omega 1\diff\mu=\langle 1,\mu\rangle
    \end{align}
    and, hence,
    \begin{equation}
        \langle \psi,\bar\mu-\mu\rangle=\langle\psi,\bar\mu\rangle-\langle\psi,\mu\rangle\le {\color{red}\langle 1,\bar\mu\rangle-\langle 1,\mu\rangle}.
    \end{equation}
    This concludes the proof.
\end{proof}
\red{
\begin{oss}
    Recall that, in general, for two proper convex functions $f_1,f_2:\CX\rightarrow \R\cup\{+\infty\}$ there holds
$\partial( f_1+f_2)\subseteq \partial f_1+\partial f_2$. 
Note that by direct computation there holds
\begin{equation}
    \partial F(\cdot)= \partial |\cdot|(\Omega)+\partial \iota_{\Mxpos} (\cdot).
\end{equation}
Indeed, the subdifferential of the TV norm is
\begin{equation}
    \partial |\mu|(\Omega)=\{\psi\in\mathscr{C}(\Omega,\R)|\|\psi\|_\infty \le 1 \ \text{and} \ \int_\Omega \psi\diff\mu=|\mu|(\Omega)\}
\end{equation}
and, the subdifferential of $\iota_{\Mxpos}$ can be computed similarly as in Proposition \ref{prop_partialF} and is equal to
\begin{equation}
    \partial \iota_{\Mxpos} (\mu)=\begin{cases}
        \emptyset & \mu\notin \Mxpos\\
        \{\psi\in\mathscr{C}(\Omega,\R)|\psi(x)\le 0 \ \forall x\in\Omega \ \text{and} \ \psi(x)=0\ \forall x\in\text{supp}(\mu)\} & \mu\in\Mxpos
    \end{cases}.
\end{equation}

If $\mu\notin\Mxpos$, we have $\partial \iota_{\Mxpos} (\mu)=\emptyset$ and $\partial F(\mu)=\emptyset$. Hence, $\partial |\mu|(\Omega)+\partial \iota_{\Mxpos} (\mu)=\partial |\mu|(\Omega)+\emptyset=\emptyset$, which is equal to $\partial\big(|\mu|(\Omega)+\iota_{\Mxpos}(\mu)\big)=\partial F(\mu)=\emptyset$. 

If $\mu\in\Mxpos$, having $\int_\Omega \psi\diff\mu=|\mu|(\Omega)$ for $\psi\in\mathscr{C}(\Omega,\R)$ is equivalent to requiring $\psi(x)=1$ for all $x\in\text{supp}(\mu)$ since $|\mu|(\Omega)=\langle 1,\mu\rangle=\int_\Omega 1\diff\mu$. Thus, the expression of the subdifferential $\partial |\mu|(\Omega)$ reduces to
$$\partial |\mu|(\Omega)=\{\psi\in\mathscr{C}(\Omega,\R)| -1\le \psi(x)\le 1 \ \forall x\in\Omega \ \text{and} \ \psi(x)=1 \ \forall x\in\text{supp}(\mu)\}.$$
Thanks to the above expression, it is straightforward to show that $\partial F(\mu)=\partial\big(|\mu|(\Omega)+\iota_{\Mxpos}(\mu)\big)= \partial |\mu|(\Omega)+\partial \iota_{\Mxpos} (\mu)$ for all $\mu\in\Mx$.
\end{oss}
}

Thanks to \eqref{eq_partial_penalty} of the latter proposition, it is now possible to better characterise the extremality conditions \eqref{eq_extr_cond_poisson}, under the assumption that $\mu_\lambda$, solution of \eqref{poissonblasso}, is a discrete measure. Indeed, if $\mu_\lambda=\sum_{i=1}^{N_\lambda}(a_\lambda)_i\delta_{(x_\lambda)_i}$ we have that \eqref{eq_extr_cond_poisson} is equivalent to
\begin{equation}
    \Phi^*p_\lambda(x)\le 1 \quad \forall x\in\Omega \quad\text{and} \quad \Phi^*p_\lambda\big((x_\lambda)_i\big)=1,\ i=1,\ldots,N_\lambda,
   \label{extr_cond_poisson_final}
\end{equation}
with $p_\lambda$ solution of the dual problem \eqref{dual_poissonblasso}, explicitly given by \eqref{p_lambda}.
The quantity $\Phi^*p_\lambda$, similarly as for BLASSO, is referred to as dual certificate $\eta:=\Phi^*p_\lambda$ with $p_\lambda$ as defined in \eqref{p_lambda}.

\section{The Sliding Frank-Wolfe algorithm}
\label{sec_4_sfw}
Both problems \eqref{poissonblasso} and \eqref{eq:blasso} are defined over the space $\Mx$, an infinite dimensional non-reflexive Banach space. Due to non-reflexivity, it is not simple to define therein proximal-based algorithms, see \cite{Valkonen2023} for some recent attempt. 
Any solver for such problems shall take into account the infinite dimensional nature of $\Mx$: popularly used algorithms in this setting are semi-definite programming approaches (for Fourier measurements) \cite{FernandezGranda2013}, conditional gradient algorithms \cite{Frank1956,BrediesLorenz2009,Denoyelle2019} and particle gradient descent \cite{Chizat2018OnTG,Chizat2021}, which is an optimal-transport based algorithm. 
In this work, we will only focus on conditional gradient strategies, namely the Frank-Wolfe and Sliding Frank-Wolfe algorithms, see \cite{Pokutta} for a survey. 
\begin{algorithm}[t]
\caption{Sliding Frank-Wolfe (SFW) algorithm \cite{Denoyelle2019}}
\textbf{Initialisation:}  $\mu^{0}=0$.

\textbf{repeat} for $k=1,2,\ldots,K_\text{max}$
 \begin{itemize}
     \item[] $\mu^{k}=\sum_{i=1}^{N^{k}} a_i^{k} \delta_{x_i^{k}}, a_i^{k} \in \mathbb{R}, x_i^{k}\in\Omega$, find $x_*^{k} \in \Omega$ s.t.:
     \begin{equation}
x_*^{k} \in \argmax _{x \in \Omega}\left|\eta(\lambda,\mu^k)(x)\right| \quad \text{where}\quad \eta(\lambda,\mu^k) \quad \text{is defined by \eqref{eq_gen_dual_cert} with } \mu=\mu^k
\end{equation}
\item[] \textbf{if} $\left|\eta(\lambda,\mu^k)\left(x_*^{k}\right)\right| \le 1$ 
\begin{itemize}
    \item[] $\mu^{k}$ is a solution of \eqref{eq_gen_prob_hom} $\Rightarrow$ \textbf{stop}
\end{itemize}
\item[] \textbf{else}
\begin{itemize}
    \item \textbf{insertion step}: add support for the new spike and amplitudes' estimation
    \begin{align}
        &x^{k+1/2}=\left(x_1^{k}, \ldots, x_{Nk}^{k}, x_*^{k}\right) \notag\\
        & a^{k+1/2} \in \argmin _{a \in \mathbb{R}^{N^{k}+1}} f_{y^\delta,b}\left(\Phi_{x^{k+1/2}} a\right)+\lambda\|a\|_1 +\alpha \iota_{\ge0}(a)
        \label{sfw_est_ampl} \\
        & \text{Update:} \qquad \mu^{k+1/2}=\sum_{i=1}^{N^{k}} a_i^{k+1/2} \delta_{x_i^{k}}+a_{N^k+1}^{k+1/2} \delta_{x_*^{k}}
    \end{align}
    \item \textbf{sliding step}: using a non-convex solver initialised with $\left(a^{k+1/2}, x^{k+1/2}\right)$
    \begin{align}
       & \left(a^{k+1}, x^{k+1}\right) \in \underset{(a, x) \in \mathbb{R}^{N^k+1} \times \Omega^{N^k+1}}{\arg \min } f_{y^\delta,b}\left(\Phi_x a\right)+\lambda\|a\|_1 +\alpha \iota_{\ge 0}(a) \label{eq_sliding_step}\\ 
       & \text{Update:} \qquad \mu^{k+1} = \sum_{i=1}^{N^{k}+1} a_i^{k+1}\delta_{x_i^{k+1}}
    \end{align}
    \item \textbf{pruning}: eventually remove zero amplitudes Dirac masses from $\mu^{k+1}$
\end{itemize}
 \end{itemize}
\textbf{until} convergence
\label{alg_sfw}
\end{algorithm}

In \cite{Denoyelle2019}, the authors detail how the conditional gradient algorithm, also known as Frank-Wolfe algorithm \cite{Frank1956}, can be used to minimise the BLASSO \eqref{eq:blasso} functional using an epigraphic lift, and then propose the Sliding version, which significantly improves the reconstruction quality by adding an extra step where both positions and amplitudes are re-optimised. Similar strategies are here used to minimise  \eqref{poissonblasso}. We report here the statement of the result without a proof as it follows from \cite[Lemma 4]{Denoyelle2019}.

\begin{lemma}
    The solution $\Bar{\mu}\in\Mx$ to \eqref{poissonblasso} is equivalent to the solution $\Bar{\mu}\in\Mx$ to the problem
\begin{equation}
    \argmin _{(t, \mu) \in C} ~\tilde{T}_\lambda^{KL}(\mu, t) \quad\text{with} \quad \tilde{T}_\lambda^{KL}(\mu, t):= \dkl(\Phi\mu+b,y)+\lambda t +\iota_{\{\Mxpos\}}(\mu),
    \label{eq_lift_blasso_poisson}
\end{equation}
where 
$C :=\left\{(t, \mu) \in \mathbb{R}^{+} \times \Mx ;|\mu|(\Omega) \le t \le M\right\} \quad \text{with } M := \frac{\dkl(b,y)}{\lambda}$.
\label{epigraphic_lif_blasso_poisson}
\end{lemma}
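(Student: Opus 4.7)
The plan is to prove equivalence of minimisers by a two-sided comparison, the argument hinging on two elementary facts: (a) $\tilde{T}_\lambda^{KL}(\mu,\cdot)$ is strictly increasing in $t$, so the epigraphic constraint $|\mu|(\Omega) \le t$ is saturated at any optimum; and (b) the upper bound $M = \dkl(b,y)/\lambda$ is calibrated so that no minimiser of \eqref{poissonblasso} is excluded by the constraint $t \le M$.

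First I would establish an a priori bound on the TV mass of any minimiser. Since the zero measure lies in $\Mxpos$, it is admissible for \eqref{poissonblasso} and attains the value $T_\lambda^{KL}(0) = \dkl(b,y)$; consequently any minimiser $\hat\mu$ of \eqref{poissonblasso} satisfies
\begin{equation}
\lambda\,|\hat\mu|(\Omega) \;\le\; T_\lambda^{KL}(\hat\mu) \;\le\; T_\lambda^{KL}(0) \;=\; \dkl(b,y),
\end{equation}
hence $|\hat\mu|(\Omega) \le M$. Granted this, the forward implication is immediate: if $\bar\mu$ solves \eqref{poissonblasso}, set $\bar t := |\bar\mu|(\Omega)$, so $(\bar\mu,\bar t) \in C$ and $\tilde{T}_\lambda^{KL}(\bar\mu,\bar t) = T_\lambda^{KL}(\bar\mu)$; for any competitor $(t,\mu) \in C$ one has $\tilde{T}_\lambda^{KL}(\mu,t) \ge T_\lambda^{KL}(\mu) \ge T_\lambda^{KL}(\bar\mu)$ using $t \ge |\mu|(\Omega)$ together with optimality of $\bar\mu$.

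For the reverse direction, suppose $(\bar\mu,\bar t)$ minimises \eqref{eq_lift_blasso_poisson}. Strict monotonicity of $\tilde{T}_\lambda^{KL}(\bar\mu,\cdot)$ forces $\bar t = |\bar\mu|(\Omega)$, otherwise replacing $\bar t$ by $|\bar\mu|(\Omega)$ (admissible since $|\bar\mu|(\Omega) \le \bar t \le M$) would produce a strict decrease; hence $\tilde{T}_\lambda^{KL}(\bar\mu,\bar t) = T_\lambda^{KL}(\bar\mu)$. To conclude that $\bar\mu$ solves \eqref{poissonblasso}, I would split an arbitrary competitor $\mu \in \Mxpos$ into two cases. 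If $|\mu|(\Omega) \le M$, then $(\mu,|\mu|(\Omega)) \in C$ and optimality of $(\bar\mu,\bar t)$ gives $T_\lambda^{KL}(\mu) \ge T_\lambda^{KL}(\bar\mu)$ directly. If instead $|\mu|(\Omega) > M$, then $T_\lambda^{KL}(\mu) \ge \lambda|\mu|(\Omega) > \lambda M = \dkl(b,y)$; but the pair $(0,0) \in C$ gives $T_\lambda^{KL}(\bar\mu) = \tilde{T}_\lambda^{KL}(\bar\mu,\bar t) \le \tilde{T}_\lambda^{KL}(0,0) = \dkl(b,y)$, closing the inequality chain. Measures $\mu \notin \Mxpos$ give $T_\lambda^{KL}(\mu) = +\infty$ and are trivial.

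The only delicate point, and therefore the step requiring the most care, is the calibration of $M$: it must be large enough that every minimiser of the original problem survives the lift (handled by the a priori bound against the zero measure) and small enough to yield a bounded feasible set $C$, which is what makes the Frank-Wolfe iterations over the lifted problem well-posed with uniformly bounded step sizes. Everything else (lower semicontinuity and coercivity ensuring existence of $(\bar\mu,\bar t)$, and the non-negativity built into $C$ via $\iota_{\Mxpos}$) is already collected in Proposition \ref{poissonblasso_uniqueness}, so no further technical input is required.
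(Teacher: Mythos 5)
Your argument is correct and is precisely the standard epigraphic-lift argument that the paper invokes by citing \cite[Lemma 4]{Denoyelle2019} rather than reproving: the a priori bound $\lambda|\hat\mu|(\Omega)\le T_\lambda^{KL}(0)=\dkl(b,y)$ calibrates $M$, and the two-sided comparison with saturation of $t=|\mu|(\Omega)$ does the rest. The only step left implicit is the pointwise non-negativity of the integrand of $\dkl$ (it is the Bregman divergence of $u\mapsto u\log u - u$), which you need both for the a priori bound and for the inequality $T_\lambda^{KL}(\mu)\ge\lambda|\mu|(\Omega)$ in the case $|\mu|(\Omega)>M$; it is worth stating explicitly.
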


Given a convex fidelity functional $f_{y^\delta,b}:L^2(\Omega) \rightarrow\R\cup\{+\infty\}$ defined in terms of the observed data $y^\delta\in L^2(\Omega)$ and, potentially, a background term $b\in L^2(\Omega)^+$, we consider the following general optimisation problem:
\begin{equation}
    \argmin_{\mu\in\Mx} ~f_{y^\delta,b}(\Phi\mu)+\lambda|\mu|(\Omega)+\alpha \iota_{\Mxpos}(\mu), \qquad \lambda >0,\quad \alpha\in\{0,1\},
    \tag{$\mathcal{P}(\lambda)$}
    \label{eq_gen_prob_hom}
\end{equation}
which encompasses the BLASSO as well as problem \eqref{eq_gen_prob_hom} and
where $\alpha\in\{0,1\}$ may enforce non-negativity constraints. We assume in the following the fidelity functional $f_{y^\delta,b}$ to be smooth on $L^2(\Omega)^+$.

The pseudocode corresponding to the optimisation of the general problem \eqref{eq_gen_prob_hom} is reported in Algorithm \ref{alg_sfw}. Observe that the stopping criterion is expressed in terms of the dual certificate  $\eta(\lambda,\mu)\in L^2(\Omega)$ of the general problem \eqref{eq_gen_prob_hom}, which is defined in terms of the subgradient of the fidelity $\partial f_{y^\delta,b}$ and reads 
\begin{equation}
    \eta(\lambda,\mu)=\frac{1}{\lambda}\Tilde{\eta}(\mu)  \quad \text{with} \quad \Tilde{\eta}(\mu)\in\begin{cases}
        -\Phi^*\partial f_{y^\delta,b}(\Phi\mu) & \alpha=0\\
        \left(-\Phi^*\partial f_{y^\delta,b}(\Phi\mu)\right)_+ & \alpha=1
    \end{cases},
    \label{eq_gen_dual_cert}
\end{equation}
depending on the parameter $\alpha$ so that the dual certificate \eqref{eq_certificate} of \eqref{eq:blasso} and the dual certificate \eqref{extr_cond_poisson_final} of \eqref{poissonblasso} can be obtained by choosing $\alpha$ and $f_{y^\delta,b}$, respectively.
Observe that the subdifferential $\partial f_{y^\delta,b}$ is single-valued for BLASSO \eqref{eq:blasso}, since $f_{y^\delta,b}(w)=\frac{1}{2}\|w-y^\delta\|^2$ is smooth on $L^2(\Omega)$. When choosing the Kullback-Leibler $f_{y^\delta,b}(w)=\tilde{D}_{KL}(w+b,y^\delta)$ for \eqref{poissonblasso}, the subdifferential $\partial f_{y^\delta,b}$ is either single-valued on $L^2(\Omega)^+$ or empty. Hence, the dual certificate is always defined without ambiguity.
The generalised optimality condition for \eqref{eq_gen_prob_hom} reads
\begin{equation}
    \|\eta(\lambda,\mu)\|_\infty\le 1,
    \label{eq_opt_cond_gen}
\end{equation}
and, under the hypothesis that the solution is a finite linear combination of Diracs as \eqref{eq_discrete_measures}, the dual certificate of the solution satisfies $\eta(\lambda,\mu)(x_i)=\sign(a_i)$ where the points $x_i\in\Omega$ are the support of $\mu$.  

\section{Parameter selection via algorithmic homotopy}  \label{sec:homotopy}

\begin{figure}[t]
    \centering
    \begin{subfigure}[b]{0.3\textwidth}
        \includegraphics[width=\textwidth]{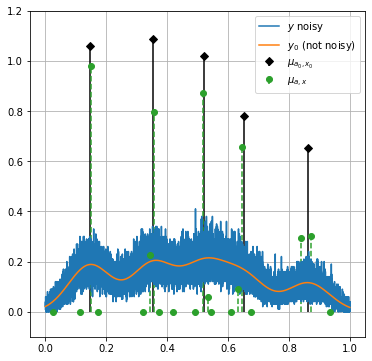}
        \caption{ $\lambda\ll 1$}
    \end{subfigure}
    \begin{subfigure}[b]{0.3\textwidth}
        \includegraphics[width=\textwidth]{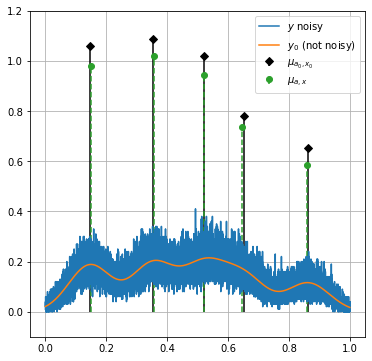}
        \caption{$\lambda=1$}
    \end{subfigure}
    \begin{subfigure}[b]{0.3\textwidth}
        \includegraphics[width=\textwidth]{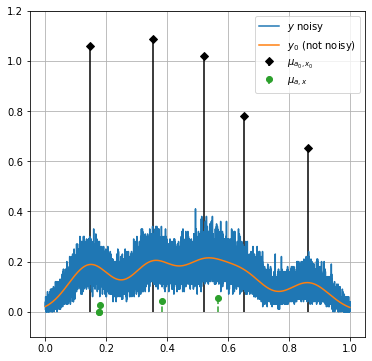}
        \caption{$\lambda\gg 1$}
    \end{subfigure}
    \caption{Reconstructions obtained using SFW for a 1D sparse deconvolution problem with Poisson noise. Ground truth spikes (black) and reconstructed ones (green) using Algorithm \ref{alg_sfw} for some choices of $\lambda$ are shown. When $\lambda\ll 1$, the number and intensities of spikes are overestimated, while for $\lambda\gg 1$} they are underestimated.
    \label{fig_lambda_sfw}
\end{figure}

The performance of Algorithm \ref{alg_sfw}
for solving \eqref{eq_gen_prob_hom} depends on the choice of the regularisation parameter $\lambda>0$: it plays indeed a fundamental role in the sparsity pattern of the solution and in the enforcement of the  stopping rule, $\max_{x\in\Omega} \left|\eta(\lambda,\mu^k)(x)\right|\le 1$. 
Namely, a high value of $\lambda$ forces only few iterations of the algorithm to be performed. Such choice impacts also  both the estimation and the sliding steps of Algorithm \ref{alg_sfw}, being it associated with the sparsity-promoting $L^1$ penalty used to compute the amplitude vector. On the other hand, smaller values of $\lambda$ provide a better data fit, with more spikes with higher intensities, see Figure \ref{fig_lambda_sfw}. 

\medskip
In \cite{Courbot2021}, the authors propose a method based on algorithmic homotopy \cite{Osborne2000,Osborne2000b} to choose an optimal regularisation parameter for the resolution of BLASSO \eqref{eq:blasso}.
The idea behind homotopy algorithms is to avoid the exploration of the whole  Pareto frontier by grid search, while providing an iterative procedure computing only few parameters up to a target value. More precisely, starting from an initial overestimated value $\lambda_1>0$, a solution $\mu_{\lambda_1}$ to \eqref{eq_gen_prob_hom} is computed by, e.g., Algorithm \ref{alg_sfw}. At each homotopy iteration, if the solution does not fit well the data up to some tolerance $\sigma_{\text{target}}$ depending on the noise magnitude $\delta$, then $\lambda$ is decreased. A new solution $\mu_{\lambda_2}$ to \eqref{eq_gen_prob_hom} is thus computed in the next homotopy step and so on.
Homotopy algorithms thus explore the Pareto frontier for a small set of values $\lambda$ and select its biggest value for which the solution to \eqref{eq_gen_prob_hom} meets a convergence criterion depending on $\delta$. In Figure \ref{fig_pareto_frontier}, one can see in red the discrete values produced by the homotopy strategy we are going to describe, which stops when the fidelity term goes under the value of $\sigma_{\text{target}}(\delta)$, in grey.

\begin{figure}[h!]
    \centering
    \includegraphics[width=0.44\textwidth]{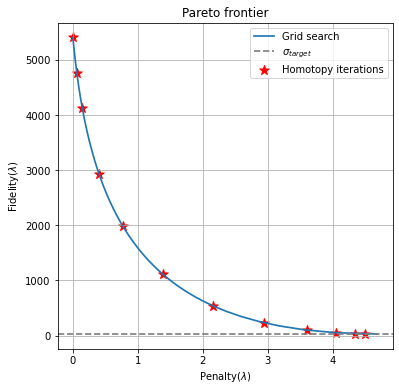}
    \caption{Homotopy algorithms explore the Pareto frontier iteratively for a strictly decreasing sequence of regularisation parameters $\lambda$. In blue: fine discretisation of the Pareto frontier with grid search. In red: homotopy iterations, corresponding to different values of $\lambda$. In gray: target value for the fidelity. }
    \label{fig_pareto_frontier}
\end{figure}

\begin{algorithm}[h!]
\textbf{Input:} $y\in L^2(\Omega)$, $b\in L^2(\Omega),\ b\ge 0$, $\Phi\in\mathcal{L}(\Mx,L^2(\Omega))$

\textbf{Parameters:} $\gamma\in(0,1)$, $c>0$, $\sigma_{\text{target}}>0$

\textbf{Output:} optimal $\hat{\mu}\in\Mx$

\textbf{Initialisation:} $\hat{\mu}_0\in\Mx$ and $\lambda_1=\gamma\left\|\eta(1,\hat{\mu}_0)\right\|_\infty$

\textbf{repeat} for $t=1,2,\ldots,T_\text{max}$
\begin{itemize}
        \item[1.] Compute $\hat{\mu}_t$ solution of \eqref{eq_gen_prob_hom} with $\lambda=\lambda_t$ with warm start $\mu_t^{[0]}=\hat{\mu}_{t-1}$.
        \item[2.] Compute $\sigma_t$ from the residual: 
        \begin{equation}
            \sigma_t=f_{y^\delta,b}(\Phi\hat{\mu}_t)
            \label{eq_sigmat_hom}
        \end{equation}
        \item[3.] \textbf{if} $\sigma_t<\sigma_{\text{target}}$
        \begin{itemize}
            \item[] $\hat{\mu}_t$ is a solution $\Rightarrow$ \textbf{stop}
        \end{itemize}
        \item[4.] \textbf{else if} $\sigma_t\ge\sigma_{\text{target}}$
        \begin{equation}
            \text{Update} \quad \lambda_{t+1}=\frac{\lambda_t \left\|\eta(\lambda_t,\hat{\mu}_t)\right\|_\infty}{c+1}
            \label{eq_alg_hom_lambdat}
        \end{equation}
    \end{itemize}
{\textbf{until}} $\sigma_t<\sigma_{\text{target}}$
\caption{Homotopy algorithm in $\Mx$}
\label{alg_homotopy}
\end{algorithm} 

In \cite{Courbot2021}, the Sliding Frank-Wolfe with homotopy is proposed for the  BLASSO problem \eqref{eq:blasso}. In the following, we extend the strategy in Algorithm \ref{alg_homotopy} for a general fidelity term within an off-the-grid setting. Note that each homotopy iteration $t\in\N$ performs the following steps:
\begin{itemize}
    \item Compute $\mu_{\lambda_t}:=\hat{\mu}_t$, solution of \eqref{eq_gen_prob_hom}  with $\lambda=\lambda_t$.
    \item Check if $\sigma_t:=f_{y^\delta,b}(\Phi\mu_{\lambda_t})\le \sigma_{\text{target}}(\delta)$,
    where the target value $\sigma_{\text{target}}(\delta)$ depends on the noise level $\delta>0$.
    \item If the condition above is not met, decrease  $\lambda$: $\lambda_{t+1}<\lambda_t$.
\end{itemize}

In the following we discuss the choice of the starting value for the sequence of $\lambda$'s, its updating rule  and the choice of a suitable value $\sigma_{\text{target}}$ for the considered noise scenario.
In Table \ref{tab_hom_gauss_poiss}, we outline the different choices made for  Alg.\ref{alg_homotopy} to both \eqref{eq:blasso} and \eqref{poissonblasso}.
\begin{table}[t]
    \centering
    \begin{tabular}{r|c|c|}
        \cmidrule[\heavyrulewidth]{2-3}
        & \multicolumn{1}{c|}{\eqref{eq:blasso}} & \multicolumn{1}{c|}{\eqref{poissonblasso}} \\
        \midrule
        $\lambda_1$ \, := & $\gamma\|\Phi^*y^\delta\|_\infty$ & $\gamma\left\|\left(\Phi^\ast\big(\frac{y-b}{b}\big)\right)_+\right\|_\infty$ \\
        $\sigma_{t}$ \, := & $\frac{1}{2}\|\Phi\hat{\mu}_t-y^\delta\|^2$ & $\dkl(\Phi\hat{\mu}_t,y^\delta)$ \\
        $\sigma_{\text{target}}(\delta)$ \, := & $\frac{1}{2}\|y-y^\delta\|^2=\frac{\delta^2}{2}$ & $\dkl(y,y^\delta)$ \\
        \bottomrule
    \end{tabular}
    \caption{Homotopy algorithmic choices for problems \eqref{eq:blasso} and \eqref{poissonblasso}.}
    \label{tab_hom_gauss_poiss}
\end{table}

\paragraph{Starting value}
The element $\eta(\lambda,\mu)$ is crucial for the definition of Algorithm \ref{alg_homotopy} as it allows to define a good starting value $\lambda_1$. We propose to initialise as follows: 
\begin{align}
\lambda_1&:=\gamma\left\|\eta(1,\hat{\mu}_0)\right\|_\infty=\gamma \left\|\Tilde{\eta}(\hat{\mu}_0)\right\|_\infty, \qquad \gamma\in(0,1)
    \label{eq_lambda1_hom} 
\end{align}
where $\hat{\mu}_0\in\Mx$ is the initialisation of the solution and $\eta$ is the dual certificate \eqref{eq_gen_dual_cert}. The parameter $\gamma\in(0,1)$ is a relaxation parameter usually chosen close to 1.
The choice \eqref{eq_lambda1_hom} is motivated by optimality conditions \eqref{eq_opt_cond_gen}. If one takes at the first iteration $\lambda_1\ge\left\|\eta(1,\hat{\mu}_0)\right\|_\infty$, then $\hat{\mu}_0$ is an optimal solution for \eqref{eq_gen_prob_hom} with $\lambda=\lambda_1$, since
\begin{equation}    \left\|\eta(\lambda_1,\hat{\mu}_0)\right\|_\infty = \left\|\frac{1}{\lambda_1}\Tilde{\eta}(\hat{\mu}_0)\right\|_\infty=\frac{1}{\lambda_1}\left\|\eta(1,\hat{\mu}_0)\right\|_\infty\le 1 \quad \iff \quad \lambda_1\ge\left\|\eta(1,\hat{\mu}_0)\right\|_\infty.
\end{equation}
In this case, the algorithm does not improve upon the initialisation $\hat{\mu}_0$ since it does not perform any iteration. 
On the contrary, choosing $\lambda_1<\left\|\eta(1,\hat{\mu}_0)\right\|_\infty$ ensures that the initial measure $\hat{\mu}_0$ is updated since the dual certificate computed with respect to the initialisation $\hat{\mu}_0$ and $\lambda_1>0$ is such that 
\eqref{eq_lambda1_hom} has supremum norm that satisfies
\begin{align}
   \left\|\eta(\lambda_1,\hat{\mu}_0)\right\|_\infty &=\frac{1}{\lambda_1}\left\|\eta(1,\hat{\mu}_0)\right\|_\infty=\frac{1}{\gamma\left\|\eta(1,\hat{\mu}_0)\right\|_\infty}\left\|\eta(1,\hat{\mu}_0)\right\|_\infty=\frac{1}{\gamma}>1. 
\end{align}

\paragraph{Updating rule}

The updating rule \eqref{eq_alg_hom_lambdat} for $\lambda$ together with the choice of a strictly positive parameter $c>0$ ensures that the measure $\hat{\mu}_t$, which is used to initialise $(\mathcal{P}_{\lambda_{t+1}})$ as $\mu_{t+1}^{[0]}=\hat{\mu}_{t}$, is not  an optimal solution for the problem. Indeed, the dual certificate computed in correspondence with $\lambda_{t+1}$ and  $\hat{\mu}_t$ reads 
\begin{equation}
\eta(\lambda_{t+1},\hat{\mu}_t)=\frac{\lambda_t}{\lambda_{t+1}}\eta(\lambda_{t},\hat{\mu}_t)=\frac{1+c}{\|\eta(\lambda_{t},\hat{\mu}_t)\|_\infty}\eta(\lambda_{t},\hat{\mu}_t)
    \; \Longrightarrow \; \|\eta(\lambda_{t+1},\hat{\mu}_t)\|_\infty=1+c >1. 
\end{equation}
Thus, $\mu_{t+1}^{[0]}=\hat{\mu}_{t}$ does not satisfy \eqref{eq_opt_cond_gen} for \eqref{eq_gen_prob_hom} with $\lambda=\lambda_{t+1}$, so the homotopy step $t+1$ computes a new candidate solution $\hat{\mu}_{t+1}$. This is indeed consistent with the choice of rejecting $\hat{\mu}_t$ at the previous step $t$.

\subsection{Descent property}

In this section, we show that the homotopy algorithm (Alg.\ref{alg_homotopy})  produces a strictly decreasing sequence of residual errors $(\sigma_t)_{t}$. This properties gives insight on the good convergence of the algorithm. 

\begin{prop}
\label{prop_descent_hom}
    If the minimisation problem \eqref{eq_gen_prob_hom} admits a unique solution, the homotopy algorithm (Alg.\ref{alg_homotopy}) for the resolution of \eqref{eq_gen_prob_hom} produces a strictly decreasing sequence of residual distances $(\sigma_t)_{t\in\N}$.
\end{prop}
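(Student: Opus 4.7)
The plan is to compare the two successive minimisers $\hat{\mu}_t$ and $\hat{\mu}_{t+1}$ through the optimality of each for its own value of $\lambda$, and extract monotonicity of the data-fit term from the resulting inequalities. The central idea is that lowering $\lambda$ must increase (or keep constant) the norm $|\mu|(\Omega)$, so by strict optimality the fidelity must strictly decrease.

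\textbf{Step 1: strict decrease of $\lambda_t$ and distinctness of the iterates.} First I would observe that the update rule \eqref{eq_alg_hom_lambdat} combined with the optimality condition \eqref{eq_opt_cond_gen} applied to the previous iterate gives $\|\eta(\lambda_t,\hat{\mu}_t)\|_\infty\le 1$, whence $\lambda_{t+1}\le\lambda_t/(c+1)<\lambda_t$. As already shown in the paragraph preceding the proposition, the warm start $\mu^{[0]}_{t+1}=\hat{\mu}_t$ satisfies $\|\eta(\lambda_{t+1},\hat{\mu}_t)\|_\infty=1+c>1$, so $\hat{\mu}_t$ is not optimal for $(\mathcal{P}_{\lambda_{t+1}})$; by uniqueness of the minimiser, $\hat{\mu}_{t+1}\neq\hat{\mu}_t$.

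\textbf{Step 2: the two strict optimality inequalities.} Since $\hat{\mu}_t\in\Mxpos$ is feasible for $(\mathcal{P}_{\lambda_{t+1}})$ and vice versa, uniqueness and distinctness of the iterates give the strict inequalities
\begin{align}
f_{y^\delta,b}(\Phi\hat{\mu}_{t+1})+\lambda_{t+1}|\hat{\mu}_{t+1}|(\Omega) &< f_{y^\delta,b}(\Phi\hat{\mu}_{t})+\lambda_{t+1}|\hat{\mu}_{t}|(\Omega), \\
f_{y^\delta,b}(\Phi\hat{\mu}_{t})+\lambda_{t}|\hat{\mu}_{t}|(\Omega) &< f_{y^\delta,b}(\Phi\hat{\mu}_{t+1})+\lambda_{t}|\hat{\mu}_{t+1}|(\Omega).
\end{align}

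\textbf{Step 3: combining the inequalities.} Adding the two bounds and cancelling the fidelity terms yields
\begin{equation}
(\lambda_{t+1}-\lambda_t)\bigl(|\hat{\mu}_{t+1}|(\Omega)-|\hat{\mu}_{t}|(\Omega)\bigr)<0,
\end{equation}
and since $\lambda_{t+1}-\lambda_t<0$ by Step 1, this forces $|\hat{\mu}_{t+1}|(\Omega)>|\hat{\mu}_{t}|(\Omega)$. Plugging this back into the first inequality of Step 2 gives
\begin{equation}
\sigma_{t+1}=f_{y^\delta,b}(\Phi\hat{\mu}_{t+1})<f_{y^\delta,b}(\Phi\hat{\mu}_{t})+\lambda_{t+1}\bigl(|\hat{\mu}_{t}|(\Omega)-|\hat{\mu}_{t+1}|(\Omega)\bigr)<\sigma_t,
\end{equation}
which is the claim.

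The only delicate point is ensuring strictness everywhere: this rests entirely on $\hat{\mu}_t\neq\hat{\mu}_{t+1}$, itself a direct consequence of the $(1+c)$-overshoot built into the update rule \eqref{eq_alg_hom_lambdat}. Everything else is a standard two-sided optimality comparison, and no regularity of $f_{y^\delta,b}$ beyond convexity is required, so the argument applies uniformly to both \eqref{eq:blasso} and \eqref{poissonblasso}.
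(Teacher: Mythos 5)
Your proof is correct and follows essentially the same route as the paper: the same two-sided optimality comparison between $\hat{\mu}_t$ and $\hat{\mu}_{t+1}$, deducing first that $|\hat{\mu}_{t+1}|(\Omega)>|\hat{\mu}_t|(\Omega)$ and then that $\sigma_{t+1}<\sigma_t$. The only (welcome) difference is that you explicitly justify the strictness of the inequalities via $\hat{\mu}_t\neq\hat{\mu}_{t+1}$, a point the paper's proof leaves implicit.
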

\begin{proof}
    Let $\lambda_{t+1}<\lambda_t$, which is true by construction \eqref{eq_alg_hom_lambdat}. We want to show that $\sigma_{t+1}<\sigma_t$.
    Let now $\hat\mu_t$ and $\hat\mu_{t+1}$ be solutions of \eqref{eq_gen_prob_hom} in correspondence with $\lambda_t$ and $\lambda_{t+1}$, respectively. We can rewrite \eqref{eq_gen_prob_hom} as
    \begin{equation}
        \argmin_{\mu\in\Mx} T_{f_{y^\delta,b},\lambda}(\mu) \quad \text{with} \quad  T_{f_{y^\delta,b},\lambda}(\mu):=f_{y^\delta,b}(\Phi\mu)+\lambda|\mu|(\Omega)+\alpha \iota_{\Mxpos}(\mu)
        \label{previous}
    \end{equation}
    and observe that, for $\hat\mu\in\Mxpos$ solution of \eqref{previous},
    \begin{equation}
        T_{f_{y^\delta,b},\lambda}(\hat\mu)=f_{y^\delta,b}(\Phi\hat\mu)+\lambda|\hat\mu|(\Omega).
    \end{equation}
    Note that for any $t\in\N$, the following holds true:
    \begin{align}
        \label{Prop1}
        & \sigma_{t+1}+\lambda_{t+1}|\hat\mu_{t+1}|(\Omega)=T_{f_{y^\delta,b},\lambda_{t+1}}(\hat\mu_{t+1})<T_{f_{y^\delta,b},\lambda_{t+1}}(\hat\mu_{t})=\sigma_{t}+\lambda_{t+1}|\hat\mu_{t}|(\Omega)\\ 
        & \sigma_{t}+\lambda_{t}|\hat\mu_{t}|(\Omega)=T_{f_{y^\delta,b},\lambda_{t}}(\hat\mu_{t})<T_{f_{y^\delta,b},\lambda_{t}}(\hat\mu_{t+1})=\sigma_{t+1}+\lambda_{t}|\hat\mu_{t+1}|(\Omega) \label{Prop2},
    \end{align}
    by optimality of $\hat\mu_t$ and $\hat\mu_{t+1}$ for $T_{f_{y^\delta,b},\lambda_{t}}$ and $T_{f_{y^\delta,b},\lambda_{t+1}}$ respectively.  Rewriting now \eqref{Prop2} as
    \begin{equation}
        \sigma_t+\lambda_{t+1}|\hat\mu_t|(\Omega)+(\lambda_t-\lambda_{t+1})|\hat\mu_t|(\Omega)<\sigma_{t+1}+\lambda_{t+1}|\hat\mu_{t+1}|(\Omega)+(\lambda_t-\lambda_{t+1})|\hat\mu_{t+1}|(\Omega)
    \end{equation}
    yields
    \begin{equation}
        (\lambda_t-\lambda_{t+1})\big(|\hat\mu_{t+1}|(\Omega)-|\hat\mu_{t}|(\Omega)\big)>[\sigma_t+\lambda_{t+1}|\hat\mu_t|(\Omega)]-[\sigma_{t+1}+\lambda_{t+1}|\hat\mu_{t+1}|(\Omega)]>0 \quad \text{by \eqref{Prop1}}.
    \end{equation}
    Hence, since $\lambda_t-\lambda_{t+1}>0$ by hypothesis, we have $|\hat\mu_{t+1}|(\Omega)-|\hat\mu_{t}|(\Omega)>0$, that is $|\hat\mu_{t}|(\Omega)<|\hat\mu_{t+1}|(\Omega)$.  We can thus deduce from \eqref{Prop1} that
    \begin{equation}
        \sigma_t-\sigma_{t+1}>\lambda_{t+1}\big(|\hat\mu_{t+1}|(\Omega)-|\hat\mu_{t}|(\Omega)\big)>0
    \end{equation}
    since $|\hat\mu_{t+1}|(\Omega)-|\hat\mu_{t}|(\Omega)>0$ and $\lambda_{t+1}>0$.
    Thus, we obtain $\sigma_{t+1}<\sigma_t$, which concludes the proof.
\end{proof}
Observe that, under the hypothesis of injectivity of the forward operator $\Phi$, the latter result is valid both for \eqref{eq:blasso} \cite{bredies2013inverse} and for \eqref{poissonblasso} (Proposition \ref{poissonblasso_uniqueness}).

\begin{oss}
    The proof of Proposition \ref{prop_descent_hom} holds for any strictly decreasing updating rule.  
    However, using an updating rule different from \eqref{eq_alg_hom_lambdat} there is no guarantee that at iteration $t+1$ of Algorithm \ref{alg_homotopy} the measure $\mu_{t+1}^{[0]}=\hat{\mu}_{t}$ is not already optimal for $\mathcal{P}(\lambda_{t+1})$, thus requiring an immediate update of $\lambda$. 
\end{oss}

\section{Numerical tests} \label{sec:results}

In this section, we report numerical results to \eqref{poissonblasso} computed by means of the  Sliding Frank-Wolfe (SFW) Algorithm \ref{alg_sfw} with homotopy (Algorithm \ref{alg_homotopy}) for several off-the-grid sparse deconvolution problems in simulated 1D/2D and real 3D fluorescence microscopy data. 

\subsection{Simulated 1D experiments}

\begin{figure}[t]
    \centering
    \includegraphics[width=0.8\textwidth]{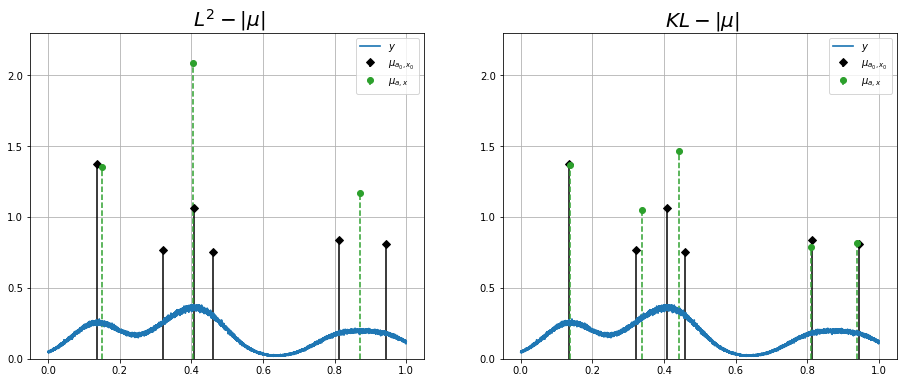}
    \caption{1D comparison between Gaussian (left) and Poisson (right) models. In black: ground truth spikes. In green: reconstructed spikes. For both models, $\lambda=8.82$.}
    \label{fig_1D_rec}
\end{figure}

The aim of this first set of experiments on simulated 1D blurred signals corrupted with Poisson noise is to validate our model \eqref{poissonblasso} and to compare its performance with BLASSO \eqref{eq:blasso}. In a discrete setting, several works (e.g., \cite{Bertero2018,Harmany2012,Calatroni2017} and \cite{Johnson24phase} for examples in microscopy) have proposed both analytical and numerical approaches for precisely modelling signal-dependent Poisson noise using discrete version of the KL divergence \eqref{eq_kl}. Generally speaking, such choices improve performance (in terms of, e.g., localisation/reconstruction quality, especially in low-photon count regimes) in comparison to simpler $L^2$ Gaussian models, but only slightly. This is due to the biases introduced by the handcrafted regularisation employed, such as, e.g., $\ell_1$ or TV-type. Aiming at reconstructing weighted sums of Diracs, for which the regulariser \eqref{eq:TV_measure} is tailored, we wonder in the following experiments whether in such infinite-dimensional setting the improvement is more evident.

For that, we test 10 different ground truth signals with 6 randomly located spikes in the 1D domain $\Omega=[0,1]$. For each ground truth signal, the position of each spike is sampled from a uniform distribution over $\Omega$, as well as their amplitudes from a uniform distribution over $[1-d,1+d]$ with $d=0.4$. The corresponding acquisitions are blurred by a Gaussian 1D PSF with $\sigma=0.07$, a spatially constant background $b=0.01$ is considered, and then several Poisson noise realisations are generated as acquired data. In Figure \ref{fig_1D_rec}, one simulated ground truth signal $\mu_\text{gt}$ is shown (black spikes) together the corresponding Poisson noisy and blurred data (blue signal). All 1D signals are then reconstructed by using both the Poisson  \eqref{poissonblasso} and the Gaussian noise model BLASSO \eqref{eq:blasso} with $\lambda\in(0,10]$ using Algorithm \ref{alg_sfw}. 
Figure \ref{fig_1D_rec} shows an example between the two reconstructions (BLASSO, left, Poisson model, right)  $\mu_\text{rec}$ (green spikes) for $\lambda=8.82$. For such illustrative example, the Poisson model provides a better estimate (in terms both of amplitudes and localisation) than the Gaussian model.  

To assess such observation over different choices of regularisation parameters (to which the models could be sensitive), we then performed a statistical test comparing localisation/reconstruction performance for all the generated signals.
To evaluate the goodness of the reconstructions, we consider the Jaccard index defined in terms of the number of True Positive (TP), False Positive (FP) and False Negative (FN) spikes as follows
\begin{equation}
\text{Jac}_\delta(\mu_\text{gt},\mu_\text{rec})=\frac{\#\text{TP}}{\#\text{TP}+\#\text{FP}+\#\text{FN}}\in[0,1]
\end{equation}
with tolerance radius $\delta>0$. 
TP are reconstructed spikes that are at a distance less than $\delta$ from a ground truth spike, while reconstructed spikes that are more than $\delta$ distant from each ground truth spike are denoted by FP. FN are spikes in the ground truth which have not been associated to any TP.

\begin{figure}[h]
    \centering
    \begin{tabular}{c}
        \includegraphics[width=\textwidth]
{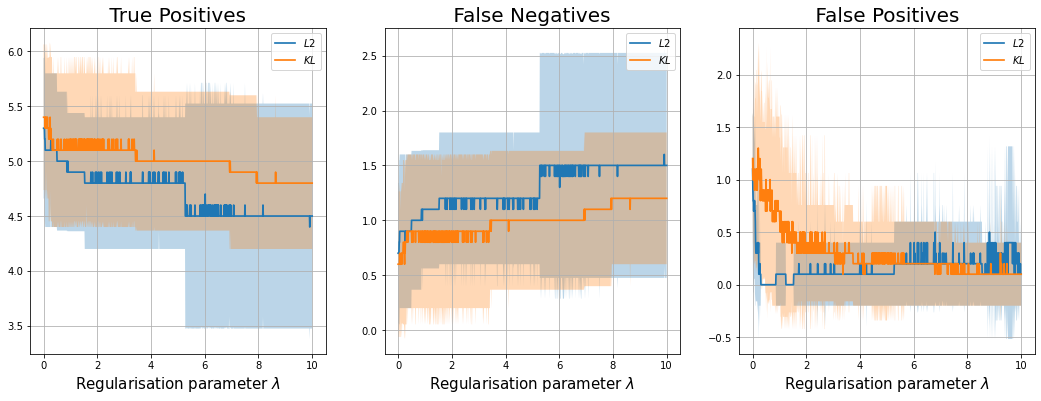}\\
(a) TP, FN and TP\\
\includegraphics[width=\textwidth]
{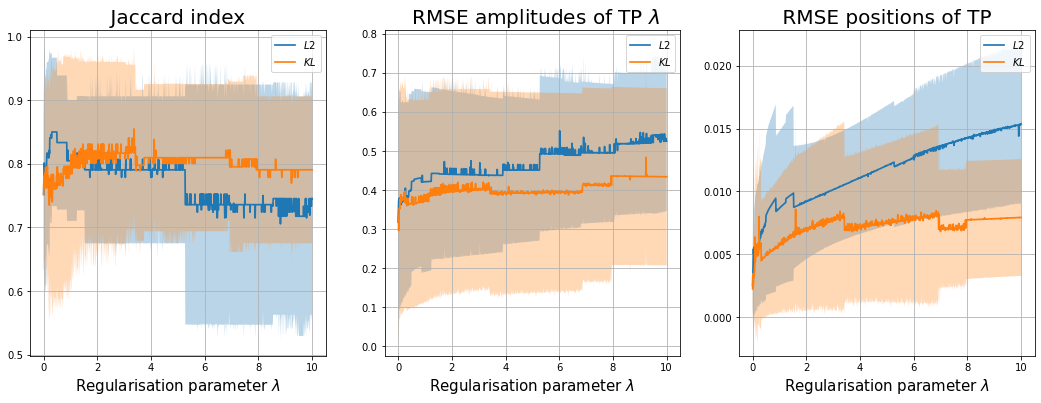}\\
(b) Jaccard index and RMSE of amplitudes and positions
    \end{tabular}
    \caption{Mean values over 100 different randomly generated ground truth signals with 6 spikes and their corresponding reconstructions. Shaded area corresponds to standard deviation. Maximum number of iterations of SFW: $2N_{\text{spikes}}$. Tolerance radius for computation of the Jaccard Index is $\delta=0.05$.}
    \label{fig_1D_jacc}
\end{figure}

\begin{table}[t]
    \centering
\begin{tabular}{l|ll}
        \toprule
        Parameters & $L^2-|\mu|$ & $\dkl-|\mu|$ \\
        \midrule
        Max. number of homotopy iterations & $2 N_{\text{molecules}}$ & $2 N_{\text{molecules}}$ \\
        Max. number of inner SFW iterations & 1 & 1 \\
        Homotopy parameter $c$ & 15 & 40 \\
        Homotopy parameter $\gamma$ & 0.9 & 0.9\\
        Choice of $\sigma_{\text{target}}$ & $1.5 \cdot \frac{1}{2}\|\Phi\mu_{\text{gt}}+b-y\|^2$ & $1.5 \cdot \kl(\Phi\mu_{\text{gt}}+b,y)$ \\
        \bottomrule
    \end{tabular}
    \caption{Parameters used by the homotopy algorithm (Alg.\ref{alg_homotopy}) in the 1D simulated comparison tests.}
    \label{tab_hom_parameters}
\end{table}
\begin{table}[th!]
    \centering
\begin{tabular}{lcc}
        \cmidrule[\heavyrulewidth]{2-3}
        & \multicolumn{1}{c}{$L^2-|\mu|$} & \multicolumn{1}{c}{$\dkl-|\mu|$} \\
        \midrule
        Jaccard index & 0.74  & \textbf{0.76} \\
        Number of TP & 4.50  & \textbf{4.80} \\
        Number of FN & 1.50  & \textbf{1.20} \\
        Number of FP & \textbf{0.10}  & 0.40 \\
        RMSE on amplitudes of TP & 0.41  & 0.44 \\
        RMSE on positions of TP & 0.014 & 0.015 \\
        \midrule
        Final estimated $\mathbf\lambda$ & 6.09 & 40.21 \\
        Number of homotopy iterations & 4.55 & 3.93 \\
        Value of $\sigma_{\text{target}}$ & 4.09 & 77.16 \\
        \bottomrule
    \end{tabular}
    \caption{Homotopy algorithm: comparison between BLASSO and the Poisson off-the-grid modelling. Mean values over 100 different randomly generated ground truths with 6 spikes.}
    \label{tab_hom_confronto}
\end{table}
Similarly as in \cite{Denoyelle2019}, we used also the RMSE of the amplitudes $a$ and positions $x$ of the TP spikes as a different quality metric:
\begin{align}
    & \text{RMSE}_x(\mu_\text{gt},\mu_\text{rec})=\sqrt{\frac{1}{\#\text{TP}}\sum_{i\in \text{TP}}\big((x_\text{rec})_i-(x_{gt})_i\big)^2 } \\ 
    & \text{RMSE}_a(\mu_\text{gt},\mu_\text{rec})=\sqrt{\frac{1}{\#\text{TP}}\sum_{i\in \text{TP}  }\big((a_\text{rec})_i-(a_{gt})_i\big)^2}.
\end{align}
In Figure \ref{fig_1D_jacc}(a), we plot the number of  TP, FN and FP reconstructed by models \eqref{eq:blasso} and \eqref{poissonblasso}  for a large finely discretised range of $\lambda$. In Figure \ref{fig_1D_jacc}(b) the Jaccard index (computed with $\delta=0.05$) and the Root Mean Square Error (RMSE) of amplitudes and positions are also reported.
The proposed Poisson model \eqref{poissonblasso} has a better performance in terms of TP and FN and, overall, in terms of the Jaccard index and RMSE of amplitudes and positions. Only for the number of FP, BLASSO \eqref{eq:blasso} results slightly better than \eqref{poissonblasso} for small values of $\lambda$. This is due to the fact that \eqref{poissonblasso} usually requires more iterations of SFW before reaching convergence. This results in a better estimation of the number of molecules, with TP being closer to the actual number of spikes, which may cause an overestimation of the number of spikes with a consequently higher value of FP. 
\smallskip

Using the same dataset, we then compare the results obtained running the  homotopy algorithm (Alg.\ref{alg_homotopy}) for the automatic selection of the regularisation parameter $\lambda$ for both problems \eqref{eq:blasso} and \eqref{poissonblasso} solved by Algorithm \ref{alg_sfw} as inner solver, with the algorithmic parameters specified in Table \ref{tab_hom_parameters}. 
To reduce the computational burdens, we observed that only one iteration of SFW was enough, as the estimated measures are anyway updated in the next homotopy step. For the same reason, we set the maximum number of homotopy outer iterations to be equal to twice the number of peaks in the ground truth.
As far as $\sigma_{\text{target}}$ is concerned, being in a simulated environment, we  computed the exact value of the residual $\sigma_{\text{exact}}=f_{y^\delta,b}(\Phi\mu_\text{gt})$ and set $\sigma_\text{target}=1.5\cdot\sigma_\text{exact}$, to be compared with the residual $\sigma_t$ at current iteration $t$ of Algorithm \ref{alg_homotopy}.  Since $\sigma_{\text{exact}}$ is unknown in real situations, a possible strategy for its estimation will be discussed in the next section.
In Table \ref{tab_hom_confronto}, we report the values of TP, FN, FP, Jaccard index and RMSE of the reconstructed signals for both models. The final estimated $\lambda$ is also reported in Table \ref{tab_hom_confronto}, together with the number of performed homotopy iterations and of the value $\sigma_{\text{target}}$. Using homotopy, we observe that we retrieve values which are comparable with the best ones obtained using SFW with grid search. This shows the effectiveness of the homotopy strategy. Overall, the algorithm applied to solve \eqref{poissonblasso} yields better results than \eqref{eq:blasso} in the presence of Poisson noise, with a reduction of the number of FN and an improvement of the accuracy in terms of Jaccard index. 

\subsection{Simulated 2D and 3D examples: choice of $\sigma_{\text{target}}$}

To avoid the choice of $\sigma_{\text{target}}$ to depend on the ground truth image, we propose in this section an heuristic strategy to estimate a reasonable value $\sigma_{\text{target}}$ relying on the sole acquisition $y^\delta$ and on the assumption that the signal is sparse. To better illustrate the proposed strategy we consider a 2D example of simulated blurred and noisy microscopy acquisitions on the domain $\Omega=[0,1]^2$. The 2D simulated ground truth has 15 spikes with positions randomly sampled from a uniform distribution over $\Omega$, and amplitudes sampled from a uniform distribution over $[0.5,1.5]$. The corresponding acquisition is blurred by a 2D Gaussian PSF with $\sigma=0.07$ and a spatially constant background $b=0.05$ is considered. Then, a Poisson noise realisation is considered as $y^\delta$, sampled from a Poisson random variable with mean $\Phi\mu_\text{gt}+b$.

Under a suitable sparsity level of the ground truth image, it is safe to assume that its corresponding noisy and blurred acquisition $y^\delta$ presents regions containing only background noise, which we denote by $\Omega_{\text{bg}}\subset \Omega$. In Figure \ref{fig_2D_hom} we show $y^\delta$ and highlight $y^\delta|_{\Omega_{\text{bg}}}$ in transparency, i.e. the acquisition "masked" to the area of background noise in the external square-ring. We propose to estimate the value of $\sigma_{\text{target}}$ as follows
\begin{equation}
    \sigma_{\text{target}} = f_{y^\delta,b}(0)\big|_{\Omega_{\text{bg}}}\frac{|\Omega|}{|\Omega_{\text{bg}}|},
    \label{eq_est_sigma_target}
\end{equation}
where the restriction of the fidelity term to $\mu=0$ is due to the fact we assume the desired image $\mu$ to be null in $\Omega_{\text{bg}}$, i.e. $\mu|_{\Omega_{\text{bg}}}=0$. Note that considering $\sigma_{\text{target}} = f_{y^\delta,b}(0)\big|_{\Omega_{\text{bg}}}$  would be equivalent to assume that the noise is null on $\Omega\setminus\Omega_{\text{bg}}$, which is obviously not true. The formula \eqref{eq_est_sigma_target} is thus adjusted to account for noise not only on $\Omega_{\text{bg}}$ but on the whole domain $\Omega$. Note that it is also possible to approximately estimate the (constant) background $b\in L^2(\Omega)$ in $\Omega_{\text{bg}}$ by taking
\begin{equation}
    b=\frac{1}{|\Omega_{\text{bg}}|}\int_{\Omega_{\text{bg}}}y^\delta(t)\diff t.
    \label{eq_est_background}
\end{equation}

\begin{table}[b]
    \centering
    \begin{tabular}{l|l|l}
    \toprule
        Theoretical value (based on the ground truth) & $\dkl(\Phi\mu_{gt}+b,y^\delta)$ & 842.3\\
        Poisson discrepancy principle \eqref{eq_bertero} \cite{Bertero2010ADP} & $\frac{|\Omega|}{2}$ & 8192\\
         Estimation based only on $y^\delta$ and $\Omega_{\text{bg}}$ \eqref{eq_est_sigma_target} & $\dkl(b,y^\delta_{bg}) \frac{|\Omega|}{|\Omega_{\text{bg}}|}$ & 846.4\\
         \bottomrule
    \end{tabular}
    \caption{Different estimates of $\sigma_{\text{target}}$ in the 2D simulated setting}.
    \label{tab_2D_hom_sigma_target}
\end{table}
\begin{figure}[b]
\centering
\begin{subfigure}[b]{0.4\textwidth}
    \includegraphics[width=\textwidth]{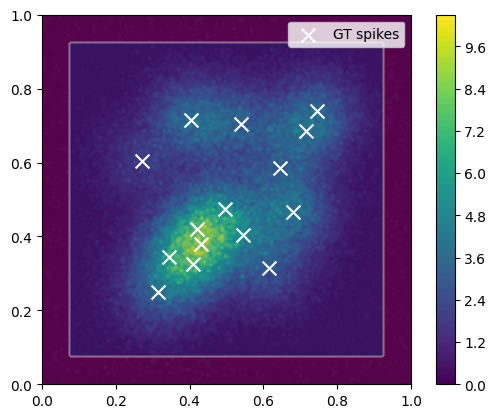}
    \caption{Data}
\label{fig_2D_hom}
\end{subfigure}
\begin{subfigure}[b]{0.4\textwidth}
    \includegraphics[width=\textwidth]{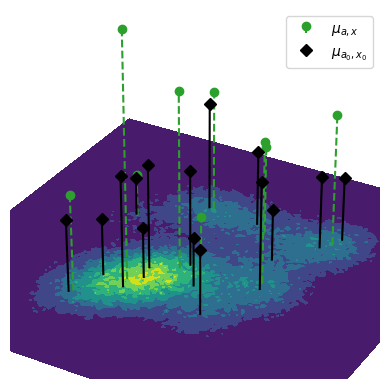}
    \caption{Reconstruction}
\end{subfigure}\\
\begin{subfigure}[b]{\textwidth}
    \includegraphics[width=\textwidth]{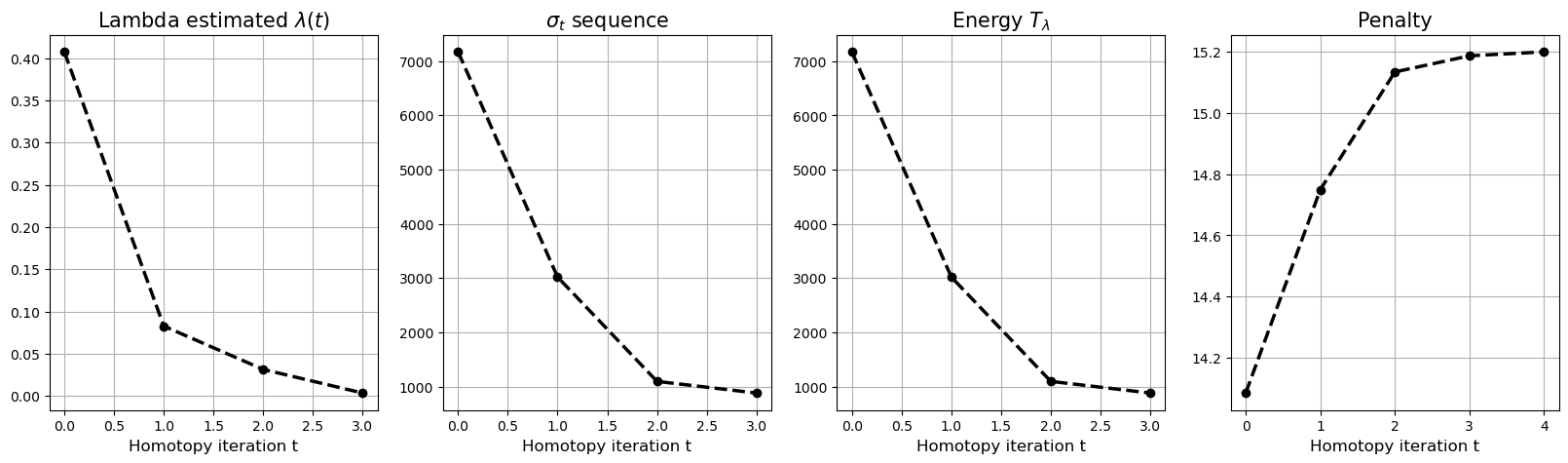}
    \caption{Values of $\lambda_t$, $\sigma_\text{target}$ and cost functional along the homotopy iterations.}
\end{subfigure}
\caption{2D fluorescence microscopy simulated image. (a) 2D sparse image (white crosses denote off-the-grid locations) and its corresponding noisy blurred acquisition $y^\delta$ with $y^\delta|_{\Omega_{\text{bg}}}$ visualised in trasparency. (b) 2D reconstruction (green spikes) obtained with homotopy algorithm (Alg. \ref{alg_homotopy}) compared with the ground truth spikes (black).}
\label{eq_2D_results}
\end{figure}
For the image shown in Figure \ref{fig_2D_hom}, we compare in Table \ref{tab_2D_hom_sigma_target} different choices of $\sigma_{\text{target}}$ for \eqref{poissonblasso} with $f_{y^\delta,b}$ being the Kullback-Leibler fidelity term. In particular, we consider in the second row the estimate proposed in \cite{Bertero2010ADP}, where a discrepancy principle for Poisson data is studied under the following approximation
\begin{equation}
    \dkl(\Phi\mu_{gt}+b,y^\delta)\approx \frac{|\Omega|}{2}.
    \label{eq_bertero}
\end{equation}
This value is obtained by computing the expected value for Kullback-Leibler fidelity and by approximating it with a first order Taylor expansion. As observed also in \cite{Bertero2010ADP}, the estimate \eqref{eq_bertero} might not be optimal and, indeed, one should consider 
\begin{equation} \label{eq:estimate_bertero}
    \dkl(\Phi\mu_{gt}+b,y^\delta)\approx \frac{1-\epsilon}{2}|\Omega|,
\end{equation}
where $\epsilon\in\R$ is small. When $\Omega$ is big, \eqref{eq:estimate_bertero} might lead to bad estimates even if $\epsilon$ is very small. As shown in Table \ref{tab_2D_hom_sigma_target}, the approximation \eqref{eq_bertero} does not lead indeed  to an accurate estimation of $\sigma_{\text{target}}$. On the contrary, the estimation \eqref{eq_est_sigma_target} proposed (last row), is close to the real value (first row) which is known given the simulated setting. 
We remark that in recent work \cite{Bevilacqua_2023_masked} similar masking strategies are used to define parameter selection strategies for variational noise in Poisson scenarios, with a detailed description of the  modifications arising to the underlying statistical laws when performing such choice. The application of analogous strategies to the problem considered is an interesting venue for future work.

By using the homotopy algorithm (Alg.\ref{alg_homotopy}) with SFW (Alg.~\ref{alg_sfw}) \footnote{Max. number of (outer) homotopy it. 20, max. number of (inner) SFW it. 1, $c=30$, $\gamma=0.9$.} for the reconstruction of the data in Figure \ref{fig_2D_hom} with background and $\sigma_{\text{target}}$ estimated by \eqref{eq_est_background} and \eqref{eq_est_sigma_target}, respectively, we obtain the results shown in Figure \ref{eq_2D_results}.

\begin{table}[t]
    \centering
    \begin{tabular}{l|l|l}
    \toprule
        Theoretical value (based on the ground truth) & $\dkl(\Phi\mu_{gt}+b,y^\delta)$ & 326.8\\
        Poisson discrepancy principle \eqref{eq_bertero} \cite{Bertero2010ADP} & $\frac{|\Omega|}{2}$ & 6400\\
         Estimation based only on $y^\delta$ and $\Omega_{\text{bg}}$ \eqref{eq_est_sigma_target} & $\dkl(b,y^\delta_{bg}) \frac{|\Omega|}{|\Omega_{\text{bg}}|}$ & 375.7\\
         \bottomrule
    \end{tabular}
    \caption{Different estimates of $\sigma_{\text{target}}$ in the 3D simulated setting for \eqref{poissonblasso}.}
    \label{tab_3D_hom_sigma_target}
\end{table}
To conclude this section, we present a test of the homotopy algorithm (Alg.\ref{alg_homotopy}) with the proposed choice of $\sigma_\text{target}$ on an exemplar 3D simulated setting. We consider $\Omega=[-1300,1300]\times[-1300,1300]\times[-1000,1000]$ as domain, and a simulated ground truth measure with 7 spikes. Their positions are sampled from a uniform distribution over $\Omega$ and their amplitudes are sampled uniformly from $[1-d,1+d]$ with $d=0.4$. We show the ground truth spikes' position with white crosses in Figure \ref{fig:sim3d}, projected on the 3 planes $xz, yz, yx$. To simulate the corresponding blurred acquisition  we consider a 3D Gaussian PSF 
\begin{equation}
    \varphi\big(x,y,z\big)=\frac{1}{\sqrt{(2 \pi)^3} \sigma_x\sigma_y\sigma_z}\exp \left[-\frac{x^2}{2 \sigma_x^2}\right]\exp \left[-\frac{y^2}{2 \sigma_y^2}\right]\exp \left[-\frac{z^2}{2 \sigma_z^2}\right]
    \label{eq:3dgausspsf}
\end{equation}
 \begin{figure}[t]
    \centering
    \begin{subfigure}{\textwidth}
        \includegraphics[width=\textwidth]{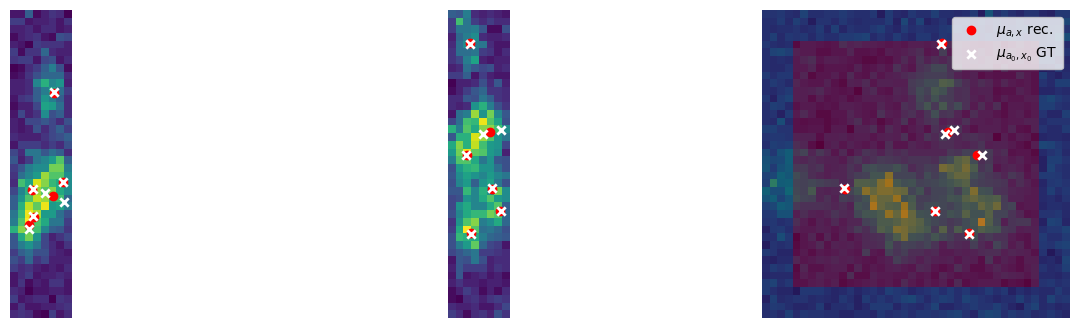}
        \caption{In red, the \eqref{poissonblasso} reconstruction is shown over the noisy acquisition $y^\delta$. The shaded area in the top view (plane $yx$, III column) highlights the area $\Omega_\text{bg}$ and $y^\delta_{\Omega_\text{bg}}$, necessaries for the estimates \eqref{eq_est_sigma_target} and \eqref{eq_est_background}.}
        \label{fig:sim3d_KL}
    \end{subfigure}\\
    \begin{subfigure}{\textwidth}
        \includegraphics[width=\textwidth]{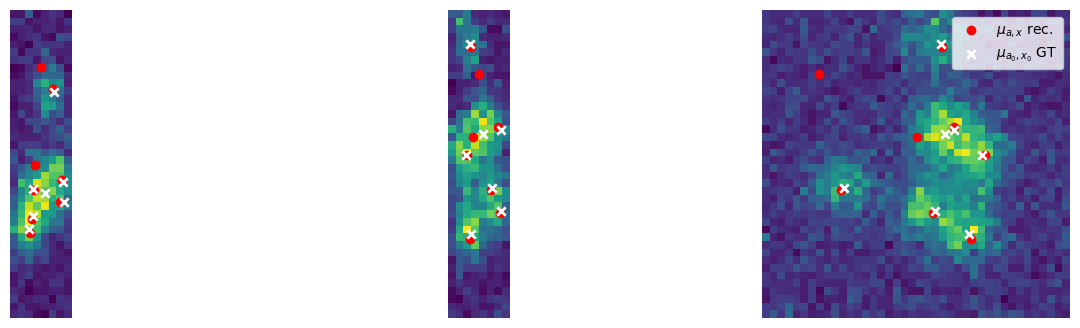}
        \caption{In red, the \eqref{eq:blasso} reconstruction is shown over the noisy acquisition $y^\delta$.}
        \label{fig:sim3d_L2}
    \end{subfigure}
    \caption{3D fluorescence microscopy simulated image. Both in (a) and in (b), the 3D sparse ground truth volume is reported with white crosses (that denote the off-the-grif locations on $\Omega$). The corresponding noisy and blurred acquisition $y^\delta$ is shown with maximum intensity projections over the $xz, yz, yx$ planes. With red dots, the off-the-grid positions of the reconstructions are shown.}
    \label{fig:sim3d}
\end{figure}
\begin{table}[t]
    \centering
\begin{tabular}{l|ll}
        \toprule
        Parameters & $L^2-|\mu|$ & $\dkl-|\mu|$ \\
        \midrule
        Max. number of homotopy iterations & $N_{\text{molecules}}+1$ & $N_{\text{molecules}}+1$ \\
        Max. number of inner SFW iterations & 2 & 2 \\
        Homotopy parameter $c$ & 5 & 20 \\
        Homotopy parameter $\gamma$ & 0.9 & 0.9\\
        Choice of $\sigma_{\text{target}}$ &  $1.1\cdot\frac{1}{2}\|\Phi\mu_\text{gt}+b-y^\delta\|^2$ & $\dkl(b,y^\delta_{bg}) \frac{|\Omega|}{|\Omega_{\text{bg}}|}$\\
         &  (\textit{exact} value) & (estimate \eqref{eq_est_sigma_target})\\
        \bottomrule
    \end{tabular}
    \caption{Parameters used by the homotopy algorithm (Alg.\ref{alg_homotopy}) in the 3D simulated comparison tests.}
    \label{tab_hom_parameters_3d}
\end{table}
with $\sigma_x=\sigma_y=200$ and $\sigma_z=400$. We set the voxel size of 65nm in $yz$ and 250nm in $z$ and added a spatially constant background $b=0.5$. The simulated blurred and noisy acquisition $y^\delta$ is shown in Figure \ref{fig:sim3d} as maximum intensity projections on the planes $xz, yz, yx$; Poisson noise is considered. 

Then, we compute the \eqref{poissonblasso} and \eqref{eq:blasso} reconstructions of the considered volume using the homotopy algorithm (Alg.\ref{alg_homotopy}) with SFW (Alg.\ref{alg_sfw}) as an inner solver, whose parameters are specified in Table \ref{tab_hom_parameters_3d}. In this test, we decided to use the estimates given by \eqref{eq_est_background} for the background and by \eqref{eq_est_sigma_target} for the value of $\sigma_\text{target}$, respectively. The region $\Omega_\text{bg}$ is highlighted in Figure \ref{fig:sim3d_KL} in transparency. We observe in Table \ref{tab_3D_hom_sigma_target} the effectiveness of the estimate given by \eqref{eq_est_sigma_target} for the choice of $\sigma_\text{target}$ in the case of Poisson noise. We remark that also the estimate of the background value given by \eqref{eq_est_background} is good: indeed, the estimated value is 0.515 (with an exact value of 0.5). For \eqref{eq:blasso}, we considered the estimated value for the background coupled with the \textit{exact} value for $\sigma_\text{target}$, computed knowing the ground truth, that is $\sigma_\text{target}=1.1\cdot\frac{1}{2}\|\Phi\mu_\text{gt}+b-y^\delta\|^2$, since the estimated value was not very accurate. This may be due to the fact that $y^\delta$ is affected by Poisson noise, which acts differently in the background and in the foreground, and, hence, using a valid rule in case of Gaussian noise (which is not signal dependent) might lead to inaccurate estimates.

The final results are shown in Figure \ref{fig:sim3d}: it results evident that the Poisson model \eqref{poissonblasso} performs better in this scenario and that the homotopy algorithm (Alg.\ref{alg_homotopy}) with the proposed estimates for the background value \eqref{eq_est_background} and for $\sigma_\text{target}$ \eqref{eq_est_sigma_target} is particularly effective.

\subsection{Real 3D dataset}
\begin{figure}[b]
    \centering
    \includegraphics[width=\textwidth]{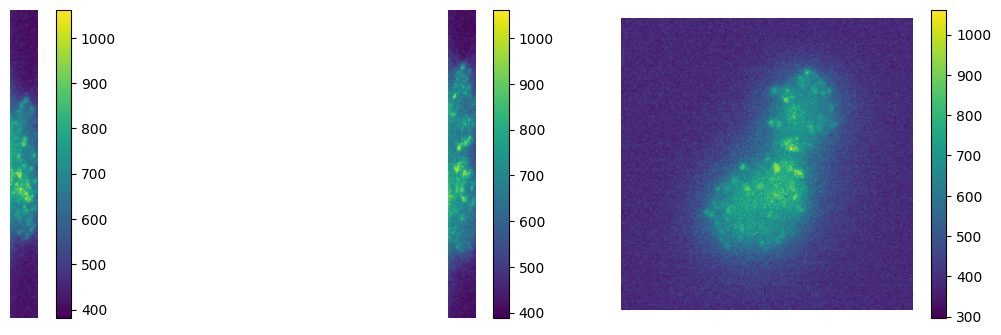} 
    \caption{ERES 3D real data (I and II columns: lateral views, III column: top view).}
    \label{fig_3d}
\end{figure}
We now consider real 3D fluorescence microscopy blurred and noisy volume data, acquired using a TIRF microscope.  The image was taken on a Nikon Ti2 with a 100x/1.49 Oil objective (TIRF), by Alejandro Melero at the MRC-LMB and was used in \cite{Navarro2020_eres3d}. 
It is an acquisition of yeasts expressing fluorescent proteins (*SEC16-sfGFP* and
a *SEC24-sfGFP*) localised at the Endoplasmic Reticulum exit sites (ERES). 
The acquired volume $y^\delta$ is shown in Figure \ref{fig_3d} with maximum intensities projections over the $xz$, $yz$, $yx$ planes. The 3D volume blurred and noisy acquisition has $190\times 190\times 17$ voxels with voxel size of 65nm in $yx$ and 250nm in $z$. Signal dependency of the noise is observed.

To reconstruct a sparse volume from the  3D acquisition $y^\delta$, we use the homotopy algorithm (Alg. \ref{alg_homotopy}) with an accelerated version of the SFW, called Boosted SFW proposed in \cite{Courbot2021}, as an inner solver to minimise \eqref{poissonblasso}. Boosted SFW reduces computational costs by limiting the number of sliding steps.
\begin{table}[t]
    \centering
    \begin{tabular}{l |c| }
    \toprule
  Parameters  & $\eqref{poissonblasso}$ \\
  \midrule
 Max. number of homotopy iterations & 10\\
 Max. number of inner SFW iterations & 50 \\
 Homotopy parameter $c$ &  $0.5$\\
 $\sigma_{\text{target}}$ given by \eqref{eq_est_sigma_target}& 1102067.75\\
 $\sigma_{\text{target}}$ given by \eqref{eq_bertero} & 306850\\
 Constant background estimate \eqref{eq_est_background} & b = 337.77\\
 \bottomrule
\end{tabular}
\caption{Parameters used in Algorithm \ref{alg_homotopy} for the reconstruction of the 3D volume}
\label{tab_3d_hom}
\end{table}

We consider a 3D convolution kernel \eqref{eq_conv_op} estimated as a 3D Gaussian PSF \eqref{eq:3dgausspsf}.
The standard deviation $\sigma_x,\sigma_y$ of the 3D PSF can be estimated from the Full Width Half Maximum, which is given by 
$ \text{FWHM}=0.61 \lambda_\text{wavelength}/\text{NA}$,
where $\lambda_\text{wavelength}$ is the emission wavelength of the fluorescent proteins and NA is the numerical aperture of the microscope. Note that if the FWHM is known, then it is possible to retrieve information about the variance parameters of the PSF since $\text{FWHM}=2.355 \cdot \sigma_x$ and $\text{FWHM}=2.355 \cdot \sigma_y$. For the standard deviation in the $z$-axis, we assume $\sigma_z=2\cdot\sigma_x$. Since the value of the numerical aperture is known, $\text{NA}=1.49$, and $\lambda_\text{wavelength}=508$nm for the green fluorescent proteins under test, we obtain a PSF estimation with $\sigma_x=\sigma_y=89$nm and $\sigma_z=178$nm, which appears to be a good approximation of the underlying blur model. 

\begin{figure}[b]
    \centering
    \includegraphics[width=\textwidth]{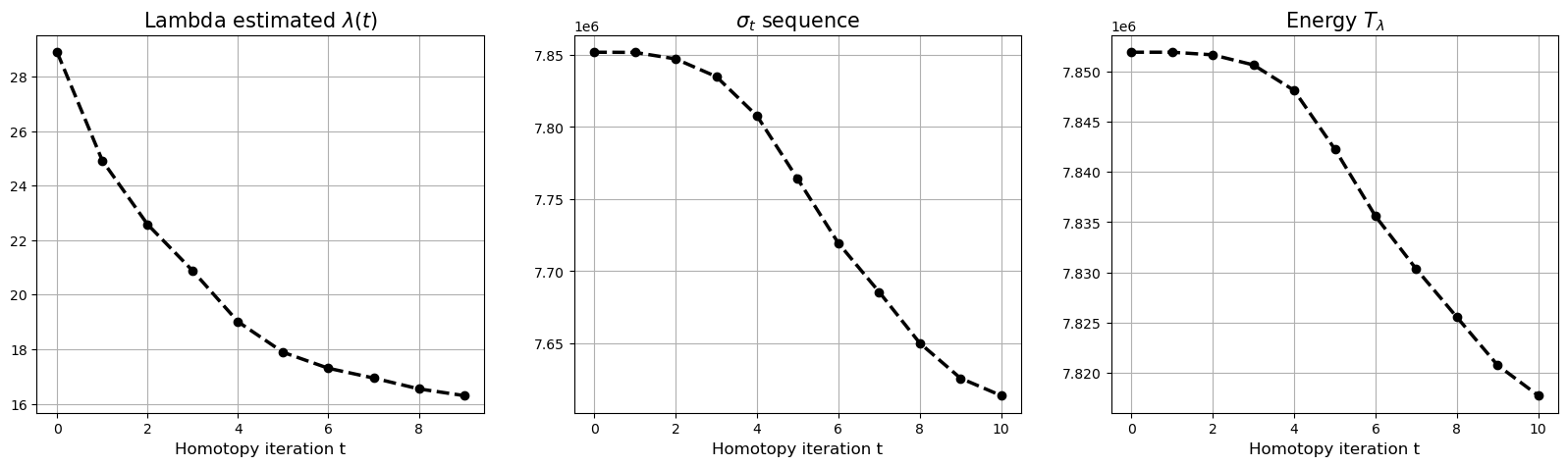}
    \caption{ERES 3D data. Values of $\lambda_t$, $\sigma_\text{target}$ and cost functional along the homotopy iterations. }
    \label{fig_3d_sigma}
\end{figure}

The parameters used to run the homotopy algorithm (Alg.\ref{alg_homotopy}) with BSFW as inner solver are reported in Table \ref{tab_3d_hom}. Note that as far as the estimate of $\sigma_{\text{target}}$ is concerned, the formulas \eqref{eq_bertero} and \eqref{eq_est_sigma_target} give very different results, so that  the estimate given by \eqref{eq_est_sigma_target} was considered as more accurate as shown in the previous section. The background is estimated by \eqref{eq_est_background} as $b=337.77$. We computed this reconstruction using  50 homotopy iterations fixing to 10 the maximum number of inner loops of the BSFW algorithm, using Google Colab CPUs for about 10 hours. The reconstructed volume $\mu_\text{rec}$ counted 274 spikes.
While probably underestimating the exact number of spikes, this first result is promising, since the use of Algorithm \ref{alg_homotopy} yields very precise localisation of spikes automatically, with no need of estimating the regularisation parameter, and no a-priori information about the data. A better visualisation of the reconstruction under different views is given in Figure \ref{fig_eres_3dscatter} using the visualisation codes used in \cite{Denoyelle2019} and provided by the authors at their GitHub page\footnote{\url{https://github.com/qdenoyelle/sfw4blasso}}.
\begin{figure}
    \centering
    \begin{tabular}{ccc}
      \includegraphics[width=0.3\textwidth]{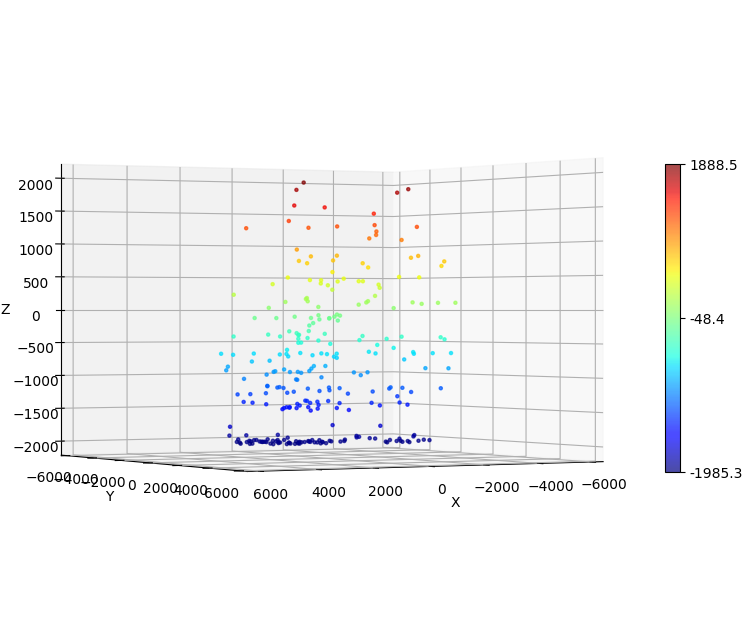}   & \includegraphics[width=0.3\textwidth]{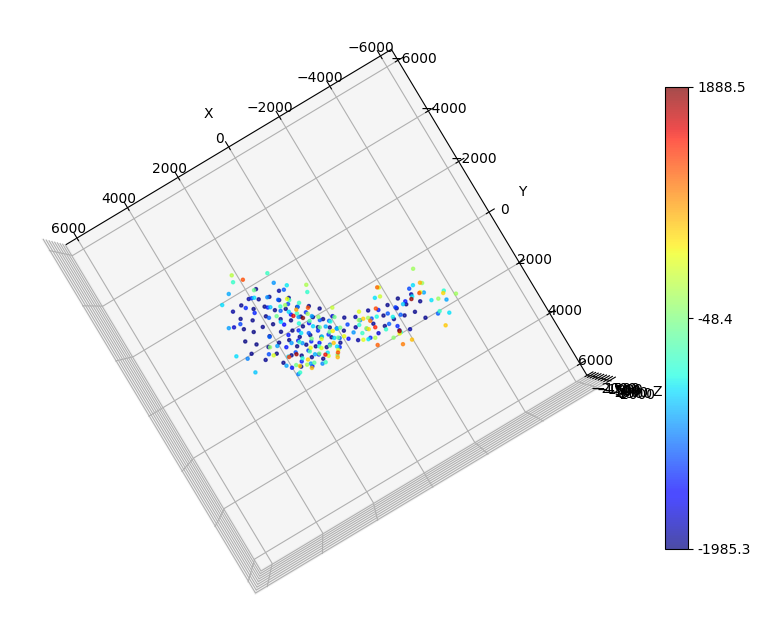}  & \includegraphics[width=0.3\textwidth]{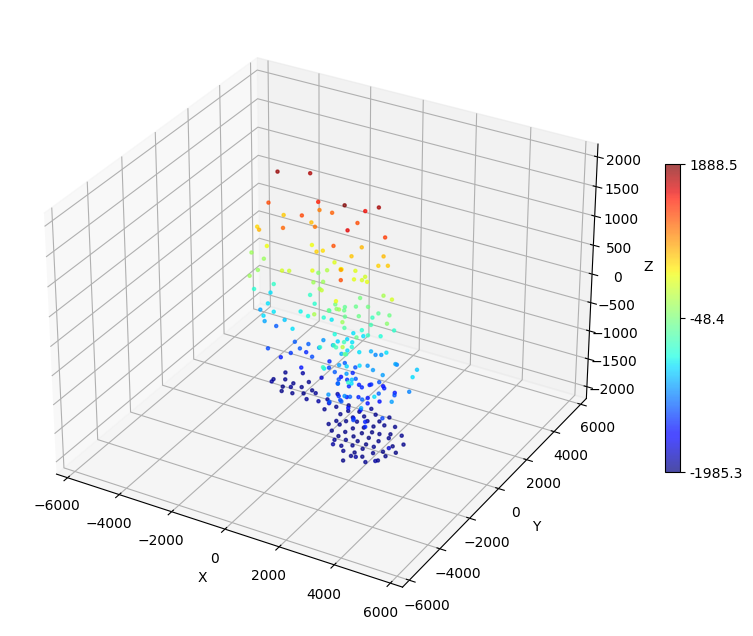}
    \end{tabular}
    \caption{Sparse reconstruction of the 3D real ERES data}
    \label{fig_eres_3dscatter}
\end{figure}

\section{Conclusions}

In this work, we considered sparse inverse problems in the off-the-grid setting of the space of Radon measures $\Mx$ under the assumption of signal-dependent Poisson noise in the measurement. Such choice is motivated by fluorescence microscopy applications, where the noise observed is modelled as Poisson to account for photon-counting processes. First, we designed a variational approach where Total Variation regularisation is coupled with a Kullback-Leibler fidelity term and a non-negativity constraint and derived analytically the optimality and extremality conditions.  
 Then, we considered the Sliding Frank-Wolfe algorithm as a numerical solver and discussed how to select a good regularisation parameter by means of an algorithmic homotopy strategy. 
 Finally, we presented several numerical experiments on simulated 1D/2D/3D data to validate the theoretical findings as well as to to compare the proposed approach with the Gaussian analogue.
To conclude, we tested the proposed framework on a 3D real fluorescence microscopy dataset, showing good performance.

In future work we plan to consider more complicated 2D and 3D optical models as the ones in \cite{Denoyelle2019}, so as to verify the effectiveness of the approach in more realistic scenarios.

\backmatter

\bmhead{Acknowledgements}
The authors warmly thank the anonymous referees of this work who provided very detailed comments and thorough corrections which significantly improved the quality of the manuscript. 
We further thank R. Petit, B. Laville and L. Blanc-F\'eraud for useful discussions on both the theoretical and numerical aspects of this work. We thank J. Boulanger (MRC-LMB, Cambridge) for thoughtful discussions and for making this work possible by having put ML, LC and CE in touch with AM, who acquired the real 3D data presented in the Section \ref{sec:results}.
\bmhead{Funding}
This paper is part of MALIN, a project that has received funding from the European Research Council (ERC) under the European Union’s Horizon Europe  programme (grant agreement No. 101117133).
LC and ML acknowledge the support received by the projects ANR
MICROBLIND (ANR-21-CE48-0008) and 
ANR JCJC TASKABILE (ANR-22-CE48-0010). CE and ML acknowledge the support received by the MUR Excellence Department Project awarded to Dipartimento di Matematica, Università di Genova, CUP D33C23001110001. 
CE and ML acknowledge the support of the "PRIN 2022ANC8HL - Inverse Problems in the Imaging Sciences (IPIS)" project, granted by the Italian Ministero dell'Università e della Ricerca within the framework of the European Union - Next Generation EU program and of the Italian INdAM group on scientific calculus GNCS.

\bmhead{Data availability} 

The codes used for implementing the models and algorithms described in this work are available at \url{https://github.com/martalazzaretti/KL-TV-off-the-grid}.

\section*{Declarations}
\bmhead{Conflict of interest}
The authors declare that they have no conflict of interest.

\bibliography{main.bib}

\end{document}